\definecolor{darkblue}{cmyk}{1,0,0,0.8}
\definecolor{darkred}{cmyk}{0,1,0,0.7}
\newcommand*\patchAmsMathEnvironmentForLineno[1]{%
  \expandafter\let\csname old#1\expandafter\endcsname\csname #1\endcsname
  \expandafter\let\csname oldend#1\expandafter\endcsname\csname end#1\endcsname
  \renewenvironment{#1}%
  {\linenomath\csname old#1\endcsname}%
  {\csname oldend#1\endcsname\endlinenomath}}% 
\newcommand*\patchBothAmsMathEnvironmentsForLineno[1]{%
  \patchAmsMathEnvironmentForLineno{#1}%
  \patchAmsMathEnvironmentForLineno{#1*}}%
\DeclareMathOperator{\lip}{Lip}
\DeclareMathOperator{\rg}{rg}
\DeclareMathOperator{\Lin}{Lin}
\newcommand{\dd}{\mathop{}\!\mathrm{d}}
\newcommand{\lm}{{\ell_{\max}}}
\newcommand{\gts}{{G_\mathrm{ts}}}
\theoremstyle{plain}
\newtheorem{theorem}{Theorem}[section]
\newtheorem{lemma}[theorem]{Lemma}
\newtheorem{proposition}[theorem]{Proposition}
\newtheorem{corollary}[theorem]{Corollary}
\newtheorem{definition}[theorem]{Definition}
\newtheorem{assumption}[theorem]{Assumption}
\newtheorem{remark}[theorem]{Remark}
\numberwithin{equation}{section}
\numberwithin{table}{section}
\numberwithin{figure}{section}
\newcommand{\R}{\mathbb{R}}
\newcommand{\Z}{\mathbb{Z}}
\newcommand{\gfde}{G_\mathrm{FDE}}
\title{Boundary-value problems of functional differential equations with state-dependent delays}
\author{
Alessia And\`o$^{1}$, Jan Sieber$^{2}$
\\[.5em]
\small $^{1}$CDLab -- Computational Dynamics Laboratory\\[-.2em]
\small Department of Mathematics, Computer Science and Physics -- University of Udine\\[-.2em]
\small via delle scienze 206, 33100 Udine, Italy\\[-.2em]
\small \texttt{alessia.ando@uniud.it}\\[.5em]
\small $^{2}$Department of Mathematics and Statistics -- University of Exeter\\[-.2em]
\small North Park Road, Exeter EX4 4QF, U.\,K.\\[-.2em]
\small \texttt{j.sieber@exeter.ac.uk}
}
\date{\today}
\begin{document}
%-----------------------------------------------------------------------------
\clearpage
\maketitle
\thispagestyle{empty}
%-----------------------------------------------------------------------------
\begin{abstract}\noindent
  We prove convergence of piecewise polynomial collocation methods
  applied to periodic boundary value problems for functional
  differential equations with state-dependent delays. The state
  dependence of the delays leads to nonlinearities that are not
  locally Lipschitz continuous preventing the direct application of
  general abstract discretization theoretic frameworks. We employ a
  weaker form of differentiability, which we call mild
  differentiability, to prove that a locally unique solution of the
  functional differential equation is approximated by the solution of
  the discretized problem with the expected order.
\end{abstract}
\smallskip
\noindent{\bf Keywords:} functional differential equations, delay differential equations, periodic
solutions, boundary-value problems, collocation methods, state-dependent delay, numerical bifurcation analysis

\smallskip
\noindent{\bf 2010 Mathematics Subject Classification:} 65L03, 65L10, 65L20, 65L60
%-----------------------------------------------------------------------------
%-----------------------------------------------------------------------------

\section{Introduction}
Tracking time-periodic responses (\emph{periodic orbits}) is a common
task in numerical bifurcation analysis of nonlinear dynamical systems
\citep{K04,GK07}. The interest extends to periodic orbits that are
dynamically unstable or extremely sensitive to parameters, as these
orbits are thresholds between alternative stable states or are the
link between seemingly discontinuous system responses, such as canards
\citep{desroches2012mixed}. Dynamically unstable or sensitive orbits
are beyond the reach of initial-value problem (IVP) solvers due to
sensitivity to initial conditions near the periodic orbit. If the
dynamical system is described by ordinary differential equations
(ODEs) there are robust tools available that address sensitivity
caused by dynamical instability and are able to track families of
periodic orbits in one or many system parameters. Widely adopted tools
for ODEs are \textsc{AUTO} \citep{D07}, \textsc{MatCont} \citep{GK07},
or \textsc{coco} \citep{DS13}. These tools compute solutions
$y(t)\in\R^{n_y}$ of an ODE with parameters, $y'(t)=G(y(t),p)$ with an
unknown period $T$, such that $y(t)=y(t+T)$ for all $t\in\R$ and some
unknown $T>0$. They solve the boundary-value problem (BVP) numerically
with piecewise polynomial collocation similar to that described by \citet{ascher1981collocation}, where the time-rescaled periodic orbit
$t\mapsto y(t/T)$ is approximated by a piecewise polynomial
$s\mapsto y^L(s)$ for $s\in[0,1]$ with $L$ pieces of (usually uniform)
degree $m$ on $L$ subintervals of $[0,1]$. The method imposes the ODE
at chosen time points (nodes) within each subinterval to construct a large nonlinear system of algebraic equations with a
blockdiagonal Jacobian. Tools such as \textsc{AUTO}, \textsc{MatCont}
and \textsc{coco} \emph{embed} the BVP by including one or several
parameters into the unknown and augmenting the BVP with constraints
(typically affine), such as phase and pseudo-arclength
conditions. This augments the blockdiagonal Jacobian, resulting in
well-conditioned problems that would be ill-conditioned if a shooting
approach over a bounded number of time intervals was employed
instead. See \citep{desroches2012mixed} for several impressive
demonstrations of how to find phenomena that occur in exponentially small
parameter regions in singularly perturbed problems.

\paragraph{Functional differential equations (FDEs)}
If the right-hand side of the differential equation depends on
values of $y$ at times other than the current $t$, one speaks of
functional differential equations, writing
\begin{align}
  \label{intro:fde}
  \dot y(t)&=\gfde(y_t,p)\mbox{, where\quad}\gfde:C^0([-\tau_{\max},0];\R^{n_y})\times\R^{n_p}\to\R^{n_y}
\end{align}
where the dependent variable is $y:\R\to\R^{n_y}$, $p\in\R^{n_p}$ are
the problem parameters, and
%\begin{align*}
  $\gfde$ %:C^0([-\tau_{\max},0];\R^{n_y})\times\R^{n_p}\to\R^{n_y}$
%\end{align*}
is a continuous nonlinear functional. The most common example class for equations such as \eqref{intro:fde} are \emph{delay differential equations} (DDEs). Our illustrative example \eqref{intro:dde:ex} below is a DDE. We use $C^\ell(I;\R^n)$ for the space
of continuous (for $\ell=0$) or continuously differentiable (for
$\ell>0$) functions from the interval $I\subset \R$ into $\R^n$. The subscript $t$ in $y_t$ denotes the time shift operator
\begin{align*}
  C^0(\R;\R^{n_y})\times\R\ni(y,t)\mapsto y_t\in C^0([-\tau_{\max},0];\R^{n_y})\mbox{\ with\ }y_t(s)=y(t+s)\mbox{}
\end{align*}
(we may also write $\dot y(t)=G_\mathrm{FDE}(y(t+(\cdot)),p)$).
Common challenging bifurcation analysis problems involving FDEs arise in optics due to transport delays \citep{Seidel22}, in population
dynamics due to non-zero maturation times (leading to implicitly
defined threshold delays and integrals over the past
\citep{gedeon2022operon,diekmann2010daphnia}), or in machining due to the effects of
the machining tool on the surface from the previous revolution
\citep{IBS08}.

\paragraph{Collocation for FDEs}
As the piecewise polynomial $y^L$ provides a natural interpolation the
collocation methods for ODEs immediately generalize to FDEs. After
time rescaling, one is looking for a $1$-periodic function satisfying
the FDE
\begin{align}\label{intro:bvp}
  y'(t)=TG_\mathrm{FDE}(y(t+(\cdot)/T),p),
\end{align}
such that one may evaluate the functional $G_\mathrm{FDE}$ at a
collocation point $t$ using the piecewise polynomial:
$(y^L)'(t)=TG_\mathrm{FDE}(y^L(t+(\cdot)/T),p)$. Imposition of the FDE
at a time point $t\in[0,1]$ introduces coupling between values of $y$
at different times such that the Jacobian of the resulting nonlinear
system of algebraic equations is no longer blockdiagonal. When one
seeks to find periodic orbits one may ``wrap around'' when finding the
value of $y$ at deviating arguments $t+s$ outside the base interval
$[0,1]$ by using $(t+s)\mod 1$. So the non-diagonal coupling is the
only added difficulty when formulating discretized periodic BVPs for
FDEs. This motivates specialized BVP solvers and analysis for the case
of finding periodic orbits in FDE problems with parameters.

Complete tools for bifurcation analysis incorporating collocation for
periodic orbits were developed and implemented by \cite{ELR02}
(\textsc{DDE-Biftool}) and \cite{S06c} (\textsc{knut}), see also
\citep{RS07} for a review. These tools permit an arbitrary number of
discrete delays as part of $G_\mathrm{FDE}$, which may depend on the
state and system parameters (for \textsc{DDE-Biftool}). A-posteriori
convergence tests on examples suggested convergence orders equal to
the degree $m$ of the polynomial pieces for the maximum norm of the
error over interval $[0,1]$. One cannot expect better (e.g.,
superconvergence) as the interpolation uses the piecewise polynomial
$y^L(\cdot)$ of degree $m$ when evaluating $G_\mathrm{FDE}$ at the
collocation points \citep{ELHR01,BKW06a}. \cite{ED02} proved linear stability for collocation methods for time-periodic linear
inhomogeneous FDEs with discrete constant delays. They pointed to
``general stability theory for discretizations of nonlinear operator
equations'' for concluding (informally) convergence of the methods,
referring to \citep{K75}.
An alternative are methods based on series expansions of Chebyshev or Fourier type, so, fixing $L=1$, letting $m$ go to infinity, and choosing a projection of the right-hand side onto the truncated series up to $m$. The high-order a-posteriori convergence observed for these methods makes them excellent candidates for verified numerical computations. Gimeno \emph{et al.}\ demonstrated how Chebyshev series truncations can be applied to some DDEs with a single state-dependent delay to prove existence of periodic orbits  and their isochrons with verified numerics \citep{glmy23,gjl21}.

\paragraph{Lack of continuous differentiability of the nonlinearity}
However, the argument by \cite{ED02} is only valid if one treats the
period $T$ of the orbit and the delays (which are system parameters) as
known constants.

Let us illustrate the obstacle to applying standard arguments for
convergence of numerical methods with the simple example
\begin{align}
  \label{intro:dde:ex} \dot y(t)=-y(t-p-y(t)),
\end{align}
where $p\approx \pi/2$.
This FDE fits the general form \eqref{intro:fde} with
%\begin{align}\label{intro:G:ex}
  $\gfde(y,p)=-y(-p-y(0))$,
%\end{align}
which is continuous on an open subset of
$C^0([-\tau_{\max},0];\R)\times\R$ for $\tau_{\max}>\frac{\pi}{2}$
(so, $n_y=n_p=1$).  The Hopf bifurcation theorem ensures that this FDE
has a family of small-amplitude periodic solutions of the form
$y(t)=y_0\sin(\theta+t/T)+O(|y_0|^2)$ with $0<|y_0|\ll1$ and period
$T=2\pi+O(|y_0|^2)$ for $p=\pi/2+O(|y_0|^2)$ and arbitrary $\theta$.  See
\citep{K04} for the classical Hopf bifurcation theorem,
\citep{H77,DGLW95} for the version for FDEs with constant delay, and \citep{S12} for its
proof for FDEs with state-dependent delays. After rescaling the time
interval to $[0,1]$ the unknown period $T$ appears explicitly as a
parameter we look for $1$-periodic functions $y$,
periods $T$, and parameters $p$ such that
\begin{align}\label{intro:dde:ex:rescaled}
  y'(t)=G(y_t,T,p)=-Ty(t-p/T-y(t)/T)\mbox{ for all $t\in[0,1)$.}
\end{align}
For the rescaled problem \eqref{intro:dde:ex:rescaled}  the right-hand side nonlinearity has the form
\begin{align}\label{intro:Gts:ex}
  \gts:C^\ell_\pi\times\R\times\R\ni (y(\cdot),T,p)\mapsto Ty\left((\cdot)-p/T-y(\cdot)/T\right)\in C^\ell_\pi
\end{align}
where the subscript $(\cdot)_\mathrm{ts}$ stands for \emph{time shift} and we use $C^\ell_\pi$ for spaces of $\ell$ times continuously
differentiable $1$-periodic functions. \begin{comment}
\begin{definition}[{\cite[Definition 1.1.1]{ampr95}}]\label{def:frechdiff}
Let $U$ and $V$ be normed spaces and define
$
\Lin(U,V):=\{f:U\to V\,|\,\text{f is linear}\}.
$
A function $F:U\to V$ is \emph{continuously Fr{\'e}chet-differentiable} at $u\in U$ if there exists a continuous map $DF:U\to \Lin(U,V)$ such that 
$$
\|F(u+\delta^u)-F(u)-DF(u)\delta^u\|_{V}/\|\delta^u\|_U\to0\mbox{\quad for $\delta^u\to0$ \textup{(}$u,\delta^u\in U$\textup{)}.}
$$
The map $DF$ is called \emph{Fr{\'e}chet derivative of $F$ at $u$}.
\end{definition}
\end{comment}
We observe that unknowns appear inside the arguments of $y$, which is itself unknown, such that application of the derivative to $\gts$ reduces the regularity
of the argument $y$ of $\gts$:
\begin{equation}\label{intro:DG:ex}
  \begin{aligned}[t]
    \lefteqn{D \gts(y,T,p)[\delta^y,\delta^T,\delta^p](t)=}\\
    &-T\delta^y(t-p/T-y(t)/T)+y'(t-p/T-y(t)/T)[\delta^y(t)+\delta^p]/T\\
    &-\delta^T \left[y(t-p/T-y(t)/T)+y'(t-p/T-y(t)/T)[p+y(t)]/T\right]
  \end{aligned}
\end{equation}
for a deviation $(\delta^y,\delta^T,\delta^p)\in C^\ell_\pi\times\R\times\R$. Thus, the right-hand side $\gts$ is not differentiable or locally
Lipschitz continuous if we consider $\gts$ as mapping $y\in C^\ell_\pi$
into $C^\ell_\pi$ for any $\ell\geq0$. The review by \cite{HKWW06} pointed out this lack
of continuous differentiability and its consequences (see also \cite{cassidy2019}). For example,
solutions for IVPs are not unique if one permits $C^0$ initial
conditions. If the initial conditions are at least $C^1$ and
\emph{compatible} \citep{W03}, then the dependence of IVP solutions on
initial values is $C^1$ but results on higher regularity
are missing. Large parts of the theory for FDEs as developed in
textbooks by \cite{H77}, \cite{HL93} and \cite{DGLW95} relies on
continuous differentiability of $\gts$ acting on arguments $y\in C^0$,
and are, thus, not applicable to problems such as
\eqref{intro:dde:ex}.

\paragraph{Convergence of numerical discretization methods for BVPs}
Similarly, discretization theory for boundary-value problems (BVPs)
has been developed for nonlinearities $\gts$ that are 
differentiable for $C^0$ arguments by \cite{mas15SINA2,mas15SINA1,mas15NM}. Indeed, it is tempting to assume differentiability of the nonlinearities and their discretized counterparts in FDE BVPs, because theorems on zeros of differentiable operators such as \cite[Lemma 19.1]{kra72} allow one to prove the uniqueness of the fixed point of the discretized problem straightforwardly. \cite{andoSIAM2020} pointed out that for periodic BVPs with unknown period $T$ the rescaling by the unknown
period $T$ introduces a state dependence of the deviating time
arguments (namely on $T$, see the term $Ty(t-p/T\ldots)$ in
\eqref{intro:dde:ex:rescaled}), even for FDE problems with constant
delay. This causes a loss of differentiability of the problem's
nonlinearity with respect to the unknown $T$. Careful reanalysis of
the general framework constructed by \cite{mas15NM} showed that
continuous differentiability with respect to the scalar variable $T$
is only needed in a single point, namely the assumed-to-exist solution
of the BVP. Thus, \cite{andoSIAM2020} proved convergence of the
routinely used methods in \textsc{DDE-Biftool} and \textsc{knut} for
FDEs with constant delays for the first time, closing the gap left in the argument of
\cite{ED02}.

The analysis of \cite{andoSIAM2020} leaves the question open how necessary special treatment of the unknown period $T$ is (a
finite-dimensional part of the unknowns of the problem), or if
convergence of numerical discretization for periodic BVPs can be
proved without assuming continuous differentiability of the right-hand
side (also referred to as \emph{Fr{\'e}chet differentiability} in, e.g., \citep{ampr95})
.
The review by \cite{HKWW06} points to the appropriate generalized
differentiability properties that are satisfied by the nonlinearities
occuring in FDEs. As one can see in the right-hand side
\eqref{intro:Gts:ex} and its
derivative \eqref{intro:DG:ex}, nonlinearities in FDEs are differentiable $\ell$ times with respect to their arguments $y$ and
$p$ only if $y\in C^\ell$. In \eqref{intro:DG:ex} we also
see that, while the derivative depends on $y'$, it does not depend on
$(\delta^y)'$. We use this property of restricted differentiability
(we call it \emph{mild} in Definition~\ref{def:mild}) to prove our
central convergence result for discretizations of embedded periodic
BVPs of FDEs. In particular, our main result is the error estimate $O(L^{-\min\{\ell_{\max},m\}})$ for right-hand sides which are mildly differentiable to order $\ell_{\max}$, where $m$ is the fixed degree of the approximating piecewise polynomials and $L$ is the increasing size of the mesh.

\section{Main results}
\label{sec:result}

\subsection{Periodic BVPs for FDEs and mild differentiability}
\label{sec:res:mild}
Consider an embedded periodic BVP of the form
\begin{align}
  \label{res:bvp}
  y'(t)&=TG_\mathrm{FDE}(y(t+(\cdot)/T),p),&
  0&=R_\mathrm{aff}[y,T,p]\mbox{,}
\end{align}
for continuous $1$-periodic $y:\R\to \R^{n_y}$, period $T$, and
parameter $p\in\R^{n_p}$, where
$G_\mathrm{FDE}:C^0([-\tau_{\max},0];\R^{n_y})\times
\R^{n_p}\to\R^{n_y}$ is continuous. The affine map
$R_\mathrm{aff}:C^0_\pi\times \R^{n_p+1}\to\R^{n_p+1}$ defines
constraints, making the system ``square'' when including the
parameters $T$ and $p$ as unknowns. %($C^k_\pi$ are $C^k$ spaces of$1$-periodic functions). 
The constraints should also eliminate the
shift symmetry that $y(\theta+(\cdot))$ is a solution of
\eqref{intro:dde:ex} for all $\theta$ whenever it is a solution for
$\theta=0$. %\red{Recent advances in the theory of periodic BVPs for state-dependent FDEs include the analysis of piecewise-continuous orbits by means of singular perturbation theory \cite{HBCHS15} and the development of a-posteriori techniques to be used in the framework of computer-assisted proofs \cite{glmy23,gjl21}.}
Nonlinearities such as $G_\mathrm{FDE}$ require the concept of \emph{mild differentiability} to describe their regularity.
\begin{definition}[Mild differentiability]\label{def:mild}
  A functional $G:C^0\to\R^{n_G}$ is called $\lm$ times mildly
  differentiable if
   \begin{enumerate}
   \item \label{def:mild:rest}\textup{\textbf{\textsf{(restricted continuous differentiability)}}}
     its restriction $G\vert_{C^\ell}:C^\ell\to\R^{n_G}$ is $\ell$ times differentiable for $\ell\leq \lm$
     (in particular, $G$ is continuous), and
   \item\label{def:mild:ext} \textup{\textbf{\textsf{(extendability)}}} the map
     $C^\ell\times C^\ell\ni (u,\delta^u)\mapsto D^\ell
     G(u)(\delta^u)^\ell\in\R^{n_G}$ has a continuous extension to
     $C^\ell\times C^{\ell-1}$ for all $\ell\leq \lm$, where $(\delta^u)^\ell$ is the $\ell$-tuple $\delta^u\ldots\delta^u$ of arguments of the multilinear map $D^\ell G(u)$.
   \end{enumerate}
\end{definition}
\noindent
This definition extends naturally to the functional on the right-hand side of \eqref{res:bvp}, 
\begin{align*}
  G(y,T,p):=TG_\mathrm{FDE}(y(t+(\cdot)/T),p),
\end{align*}
through the embedding of $\R^{n_p+1}$ into $C^0_\pi$ that treats a
vector $(T,p)$ as the constant function $t\mapsto (T,p)$. We list basic properties of mildly differentiable functionals in Section~\ref{sec:mild}, which will be needed for our convergence analysis in Section~\ref{sec:convergence}. These include the validity of the chain rule, which ensures that $t\mapsto TG_\mathrm{FDE}(y(t+(\cdot)/T,p)$ is in $C^\ell$ if $y\in C^\ell$ and $G_\mathrm{FDE}$ is $\ell$ times mildly differentiable. The nonlinearity $(y,T,p)\mapsto -Ty(-p/T-y(0)/T)$ used in example
\eqref{intro:dde:ex:rescaled} is an example of a 
mildly differentiable functional (to arbitrary degree). 
\noindent
Definition~\ref{def:mild} extends the classical ``mild regularity assumption'' made to obtain classical results for state-dependent FDEs \citep{W03,HKWW06} to to arbitrary degree $\ell>1$. Alternative approaches taken in \citep{K03,S12} are equivalent but more involved to state and check in practical examples. All problems that can be formulated with standard numerical tools such as \textsc{DDE-Biftool} and \textsc{ddesd} available in MATLAB satisfy mild differentiability in the sense of Definition~\ref{def:mild} if the problem coefficient functions are sufficiently regular. See \citep{HBCHS15} for a demonstration of how \textsc{DDE-Biftool} can be used for FDEs with state-dependent delays and, hence, only mildly differentiable right-hand sides.
 
\begin{assumption}[Assumptions on the BVP]\label{res:ass}\
  \begin{compactenum}
  \item \label{res:ass:mdiff} \textup{(Mild differentiability)} The
    right-hand side $G_\mathrm{FDE}$ in the FDE
    \eqref{res:bvp} is mildly differentiable to order
    $\lm\geq 1$.
  \item \label{res:ass:existence} \textup{(Existence of solution)} BVP \eqref{res:bvp} has a solution $(y^*,T^*,p^*)$.
  \item \label{res:ass:linear:inv} \textup{(Linear well-posedness)}
    The BVP \eqref{res:bvp}, linearized in $(y^*,T^*,p^*)$,
    has only the trivial solution $(\delta^y,\delta^T,\delta^p)=0$.
  \end{compactenum}
\end{assumption}
Corollary~\ref{thm:regularity:fp} below shows that
points~\ref{res:ass:mdiff} and \ref{res:ass:existence} imply that
$y^*\in C^2_\pi$ such that the derivative $DG(y^*,T^*,p^*)$ exists for
the mildly differentiable $G$, which permits us to pose
Assumption~\ref{res:ass},
point~\ref{res:ass:linear:inv}. Section~\ref{sec:assumptions} will
restate Assumption~\ref{res:ass} for a fixed-point
problem equivalent to \eqref{res:bvp}, defined later in \eqref{gen:fixedpoint}.

\subsection{Convergence of solution of discretized BVP}
\label{sec:res:collocation}
For polynomial collocation the unknown function is a $1$-periodic
continuous piecewise polynomial $y^L$ on a mesh given as
$0=t_0<\ldots<t_L=1$. More precisely, $y^L$ is a polynomial of degree $m$ on
$[t_{i-1},t_i]$ for all $i=1,\ldots,L$ in each of its $n_y$
components, $y^L$ is continuous, and $y(t)=y(t+1)$ for all $t$.  Additional unknowns
are the parameters $(T^L,p^L)$, resulting in $n_y mL+n_p+1$ unknowns
overall.  The system of algebraic equations,
\begin{align}\label{res:bvp:disc}
  0&=(y^L)'(t_{i,j})-T^LG_\mathrm{FDE}(y^L(t_{i,j}+(\cdot)/T^L),p^L),&
    0&=R_\mathrm{aff}[y^L,T^L,p^L]
\end{align}
for $1\leq i\leq L$, $1\leq j\leq m$, evaluates the FDE at the
collocation points $t_{i,j}=t_{i-1}+(t_i-t_{i-1})t_{\mathrm{c},j}$,
where the points $t_{\mathrm{c},j}$ are the $m$ collocation points for
degree $m-1$ on the interval $[0,1]$ for a sequence of orthogonal
polynomials (e.g. of Gauss-Legendre or Chebyshev type). The strategy
for adjusting approximation quality is a finite-element approach
\citep{andophd2020,andoSIAM2020}, keeping the degree $m$ bounded, and
considering the limit $L\to\infty$, refining the mesh such that
%\begin{align}\label{res:msh:bound}
  $\max\{t_i-t_{i-1}\}\leq C_\mathrm{msh}/L$
%\end{align}
for a bounded $C_\mathrm{msh}$ independent of $L$. In contrast, the strategy of keeping $L$ bounded and letting $m$ go to infinity is called the
\emph{spectral} element method in
\citep{breda2005pseudospectral,T96}. Spectral methods promise
exponential convergence, but require bounds on all derivatives of
$y^*$. In the proofs of
Lemmas~\ref{thm:DPhi:consistency:alt} and \ref{lemma:nonlin} we
rely on the boundedness of the degree $m$, but both Lemmas only require
$G$ to be mildly differentiable once.

We can now state a convergence theorem for the discretized
BVP \eqref{res:bvp:disc}.
\begin{theorem}[Convergence of discretization]\label{res:thm:conv}
  Under Assumption~\ref{res:ass} the discretized BVP
  \eqref{res:bvp:disc} with  $\max\{t_i-t_{i-1}\}\leq C_\mathrm{msh}/L$ %\eqref{res:msh:bound} 
  has a locally
  unique solution $x_L=(y^L,T^L,p^L)$ near $x^*=(y^*,T^*,p^*)$ for all
  sufficiently large $L$, satisfying %. The solution satisfies
  \begin{align}\label{res:thm:conv:est}
    \|x_L-x^*\|_{0,1}:=\max\{\|y^L-y^*\|_{0,1},|T^L-T^*|,|p^L-p^*|\}=O(L^{-\min\{\lm,m\}}).
  \end{align}
\end{theorem}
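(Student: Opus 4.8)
The plan is to cast both the continuous BVP \eqref{res:bvp} and its collocation discretization \eqref{res:bvp:disc} as an operator equation and its finite-dimensional analogue, and to prove convergence through the classical paradigm \emph{consistency} plus \emph{stability} implies \emph{convergence}, carried out on the equivalent fixed-point reformulation \eqref{gen:fixedpoint}. Writing $x=(y,T,p)$ and letting $\Phi(x)$ collect the defect $y'-TG_\mathrm{FDE}(y(t+(\cdot)/T),p)$ together with the constraint $R_\mathrm{aff}[x]$, the discrete problem replaces $y$ by a piecewise polynomial $y^L$ of degree $m$ and imposes the defect only at the collocation nodes $t_{i,j}$, i.e.\ composes $\Phi$ with collocation evaluation, giving $\Phi_L(x_L)=0$. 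By Corollary~\ref{thm:regularity:fp}, points~\ref{res:ass:mdiff} and \ref{res:ass:existence} of Assumption~\ref{res:ass} force $y^*\in C^2_\pi$; the same mild-differentiability bootstrap raises the regularity of $y^*$ as $\lm$ grows, and it is this smoothness, capped against the polynomial degree, that ultimately produces the exponent $\min\{\lm,m\}$ in \eqref{res:thm:conv:est}. The gained regularity also makes the Fr\'echet derivative $DG(x^*)$ exist, so that Assumption~\ref{res:ass}, point~\ref{res:ass:linear:inv}, is meaningful.

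First I would bound the consistency error, i.e.\ the size of $\Phi_L(\intp x^*)$ obtained by feeding the piecewise-polynomial interpolant of the exact solution into the discrete equations. Since $G_\mathrm{FDE}$ is evaluated only on the continuous function $\intp y^*$ and no derivative of a perturbation is involved, this defect splits into the standard interpolation error of $y^*$ and $(y^*)'$ plus the error in evaluating $G_\mathrm{FDE}$ at the interpolated deviating arguments. Using the regularity of $y^*$ together with classical piecewise-polynomial interpolation estimates for degree $m$ on a mesh satisfying \eqref{res:msh:bound}, both contributions are $O(L^{-\min\{\lm,m\}})$; this is the content of Lemma~\ref{thm:DPhi:consistency:alt}, which needs $G$ to be mildly differentiable only once.

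The crux is stability: I must show that the linearized discrete operators $D\Phi_L$, evaluated at $\intp x^*$ and at nearby discrete points, are invertible with inverses bounded uniformly in $L$. Here the \emph{extendability} clause of mild differentiability (Definition~\ref{def:mild}, point~\ref{def:mild:ext}) is decisive: although $G_\mathrm{FDE}$ is not differentiable on $C^0$, its derivative \emph{at the smooth point $x^*$} extends to a bounded linear map acting on merely continuous perturbations $\delta^y\in C^0_\pi$, because the regularity-lowering terms fall on the fixed $y^*$ and not on $\delta^y$ (compare the absence of $(\delta^y)'$ in \eqref{intro:DG:ex}). Consequently the linearized continuous problem is an ordinary linear FDE BVP with bounded, in fact compact, delayed coupling, which is boundedly invertible by Assumption~\ref{res:ass}, point~\ref{res:ass:linear:inv}. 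Standard stability of piecewise-polynomial collocation for such a linear problem, viewed as a compact perturbation of the boundedly invertible differentiation-plus-constraint operator to which a collectively compact Anselone-type argument applies, then transfers this invertibility to the discrete operators with a uniform bound for all large $L$, using the boundedness of the degree $m$.

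Finally I would close the argument with a quantitative inverse function theorem of Newton--Kantorovich type centred at $\intp x^*$. The nonlinear remainder $\Phi_L(x_L)-\Phi_L(\intp x^*)-D\Phi_L(\intp x^*)[x_L-\intp x^*]$ must be controlled in the discrete $C^0$-type norm, and this is exactly where mild differentiability is used again (Lemma~\ref{lemma:nonlin}): the second-order increments of $G$ are estimated on the finite-dimensional polynomial space, where the bounded degree $m$ supplies the inverse inequalities needed to absorb the one lost derivative. Combining the consistency bound of order $L^{-\min\{\lm,m\}}$, the uniform stability bound, and this Lipschitz-type remainder estimate yields a contraction on a ball of radius $O(L^{-\min\{\lm,m\}})$ about $\intp x^*$, producing for all sufficiently large $L$ a locally unique discrete solution $x_L$ together with the error estimate \eqref{res:thm:conv:est}. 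The principal obstacle throughout is the stability step, because the non-Lipschitz, merely mildly differentiable nonlinearity rules out the off-the-shelf stability theorems of \citet{mas15NM,andoSIAM2020}, and it is the extendability property that allows the linear theory to be invoked at the single smooth point $x^*$.
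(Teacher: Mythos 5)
Your overall architecture (consistency plus stability at the single smooth point, closed by a quantitative contraction argument) is the same as the paper's, and your stability step, while differently executed, is viable: the paper does not use a collectively compact Anselone-type argument, but instead proves \emph{operator-norm} convergence $\|D\Phi_L(x^*)-D\Phi(x^*)\|_{C^{0,1}_\mathrm{e}\leftarrow C^{0,1}_\mathrm{e}}\to 0$ (Lemma~\ref{thm:DPhi:consistency:alt}: the $C^{0,1}$-unit ball is compact in $C^0_\mathrm{e}$, so $Dg(x^*)$ maps it into a uniformly equicontinuous set on which interpolation converges uniformly) and then applies the Banach perturbation lemma (Corollary~\ref{thm:stabilityPhiL}); your route should also work, since the family $\mathcal{L}\mathcal{P}_LDg(x^*)$ is collectively compact on $C^0_\mathrm{e}$ with bounded degree $m$. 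Two smaller discrepancies: the paper centers the contraction at $x^*$ itself rather than at $\intp x^*$ --- because $\Phi_L=\mathcal{L}\mathcal{P}_Lg$ is defined on all of $C^{0,1}_\mathrm{e}$, no interpolant of the exact solution is ever needed, and the consistency term is $\mathcal{L}(\mathcal{P}_L-I)g(x^*)$ --- and the rate $O(L^{-\min\{\lm,m\}})$ for that term is the content of Lemma~\ref{lemma:cons}, not of Lemma~\ref{thm:DPhi:consistency:alt}, which yields only an unquantified modulus-of-continuity bound (sufficient for stability, not for the rate).

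The genuine gap is your treatment of the nonlinear remainder. You propose to control it ``in the discrete $C^0$-type norm'' with inverse inequalities on the polynomial space absorbing ``the one lost derivative''. Mild differentiability delivers only the mixed-norm estimate of Corollary~\ref{thm:findiff}, used in Lemma~\ref{lemma:nonlin} and inequality \eqref{nonlin:estimate0}: the remainder is small in $\|\cdot\|_0$ \emph{relative to} $\|\delta\|_{0,1}$, with no rate (little-$o$ only). Converting via the Markov-type inverse inequality $\|\delta\|_{0,1}\leq CL\|\delta\|_0$ on the discrete space costs a factor $L$, so in a pure $C^0$ framework the remainder's Lipschitz constant becomes $o(1)\cdot O(L)$, which cannot be made small for $\lm=1$ (no decay rate on the little-$o$ is available) and degrades the admissible orders otherwise. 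The paper never differentiates the increment: the contraction runs in the $\|\cdot\|_{0,1}$-norm on $C^{0,1}_\mathrm{e}$, the remainder estimate has $\|\cdot\|_0$ on the left and $\|\cdot\|_{0,1}$ on the right, and the smoothing operator $\mathcal{L}:L^\infty_\mathrm{e}\to C^{0,1}_\mathrm{e}$, applied after the uniformly bounded $\mathcal{P}_L$ (boundedness of $m$ enters only through the Lebesgue constant, not through inverse estimates), restores the strong norm. You need this structural norm-pairing, or an equivalent device, to close the contraction at the claimed generality $\lm\geq 1$.
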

\noindent
The norm $\|y^L-y^*\|_{0,1}$ in \eqref{res:thm:conv:est} is the Lipschitz norm of the discretization error.

\paragraph{Proof through equivalent fixed-point problem}
\cite{andophd2020,andoSIAM2020} reformulate the BVP \eqref{res:bvp}
as a fixed-point problem, following the general framework in
\cite{mas15NM}. We modify this approach in
section~\ref{sec:formulation} to exploit the special structure present
in periodic BVPs by constructing a fixed-point problem where the
right-hand side is compact and maps spaces of periodic functions into
spaces of periodic functions.

For this fixed-point problem the discretization corresponds to
inserting a projection operator $\mathcal{P}_L$ onto the space of
discontinuous piecewise polynomials of degree $m-1$ in between the
nonlinearity and an integral operator. This formulation ensures that
the space of numerical solutions $y^L$ consists of continuous
$1$-periodic piecewise polynomials of degree
$m$. Theorem~\ref{thm:conv} in section~\ref{sec:convergence}
establishes convergence for the discretized fixed-point problem.

%We list basic properties of mildly differentiable functionals in Section~\ref{sec:mild}. Validity of the chain rule ensures that $t\mapsto TG_\mathrm{FDE}(y(t+(\cdot)/T,p)$ is in $C^\ell$ if $y\in C^\ell$ and $G_\mathrm{FDE}$ is $\ell$ times mildly differentiable. 
Mild differentiability of $G_\mathrm{FDE}$ helps us
in section~\ref{sec:convergence} to establish consistence and stability of the numerical method
and the smallness of the nonlinear terms, leading to the proof of
Theorem~\ref{thm:conv} and, hence, Theorem~\ref{res:thm:conv}.

\subsection{Illustrative example of BVP}
\label{sec:res:example}
We will use the example \eqref{intro:dde:ex} throughout to illustrate concepts and results. The embedded BVP for finding periodic orbits of \eqref{intro:dde:ex}, rescaled to base interval $[0,1]$ with  additional affine conditions is
\begin{align}
  y'(t)&=-Ty(t-(p+y(t))/T),\mbox{\quad ($y(t)=y(t+1)$ for all $t\in\R$),}\label{dde:ex1:rescaled}\\
  0&=R_\mathrm{aff}[y,T,p]:=
  \begin{bmatrix}
    y(0)\\
    2\int_0^1\sin(2\pi t)y(t)\dd t-y_0
  \end{bmatrix}\label{Raff:ex1}
\end{align}
for a range of small $y_0$. It is sufficient to impose
\eqref{dde:ex1:rescaled} on the base interval $[0,1)$, if one takes
into account the periodicity of $y$. For the choice \eqref{Raff:ex1}
of $R_\mathrm{aff}$ the first condition fixes the phase of the
solution $y$ (serving as a phase condition) and the second condition
fixes the amplitude, locally parametrizing the solution family by the
constant $y_0$ (thus, serving as a pseudo-arclength condition). System
\eqref{dde:ex1:rescaled}--\eqref{Raff:ex1} will have solutions of the
form $y(t)=y_0\sin(2\pi t)+O(|y_0|^2)$, $T=2\pi+O(|y_0|^2)$, $p=\pi/2+O(|y_0|^2)$ for small
$y_0$ according to the Hopf bifurcation theorem \citep{S12}. The
unknowns in this problem are $(y(\cdot),T,p)$.

The functional $G(y,T,p)$ for this example and its derivative (defined
on $C^1_\pi\times\R^2$) have already been used for illustration in
\eqref{intro:dde:ex:rescaled} and in \eqref{intro:DG:ex}.

\section{Fixed-point problem equivalent to periodic BVP and relevant notation}
\label{sec:formulation}
In the formulation \eqref{res:bvp} the unknown period $T$ plays the same
role as a problem parameter. Thus, in the following we collect $T$ and $p$ into a $(n_p+1)$-dimensional parameter vector $\mu=(T,p)$, introducing the
nonlinear functional 
\begin{align}
  G(y,\mu)&=T\gfde(y((\cdot)/T),p)\mbox{,\quad where now $\mu=(T,p)\in\R^{n_\mu}$, $n_\mu=n_p+1$,}\label{gen:Gmu}
\end{align}
and abbreviate $R_\mathrm{aff}[y,\mu]=R_\mathrm{aff}[y,T,p]$
to simplify notation. Thus \eqref{res:bvp} is a special case of the general BVP
\begin{align}\label{bvp1mu}
  \begin{aligned}[t]
    y'(t)&=G(y_{t},\mu),&&\in\R^{n_y}\mbox{\ for $t\in[0,1)$,}&&\mbox{(FDE for $1$-periodic continuous $y$),}\\
    0&=R_\mathrm{aff}[y,\mu] &&\in\R^{n_\mu}&&\mbox{(affine constraints).}
  \end{aligned}
\end{align}

\noindent We reformulate \eqref{bvp1mu} as fixed-point problem for the solution, obtained from the differential
equation through the variation-of-constants formula, following
\cite{andoSIAM2020},
\begin{align}
  \label{fp:nonperiodic}
  y(t)&=y(0)+\int_0^tG(y_s,\mu)\dd s.
\end{align}
We plan to pose the fixed-point problem in a space of periodic
functions of period $1$. However, the right-hand side of equation~\eqref{fp:nonperiodic}
is not guaranteed to be $1$-periodic even if $y$ is $1$-periodic. To
enforce period $1$, we subtract the average of the integrand and then
impose that this average is zero as a separate equation, replacing the
periodic boundary condition. We also introduce the new variable
$\alpha\in\R^{n_y}$, which will be equal $y(0)$ in the
solution. Hence, BVP \eqref{res:bvp} is equivalent to the  fixed-point problem
\begin{align}
  \label{gen:fixedpoint}
  x=\Phi(x),
\end{align}
%system
%\begin{align}
%y(t)&=\alpha+\int_0^tG(y_s,\mu)\dd s-t\int_0^1G(y_s,\mu)\dd s,&&\in\R^{n_y}\mbox{\ for $t\in[0,1]$,}\nonumber\\
%0&=\int_0^1G(y_s,\mu)\dd s,&&\in\R^{n_y},\label{bvp_0mean}\\
%0&=R_\mathrm{aff}[y,\mu]&&\in\R^{n_\mu}\nonumber
%\end{align}
%for the variables $y$, which is a continuous $1$-periodic function
%with $y(t)\in\R^{n_y}$, the initial value $\alpha\in\R^{n_y}$ and the
%vector of system parameters $\mu\in\R^{n_\mu}$ (where $\mu=(T,p)$ such that $\mu$ also includes the unknown period $T$).
%
%The subtraction of the integrand's average and replacement of periodic boundary conditions by a condition stating that this average is zero modifies the fixed-point problems considered by \cite{andoSIAM2020}. The  first, infinite-dimensional, component of \eqref{bvp_0mean} is already in the form of a fixed-point problem. The other two components can be modified to become fixed point problems 
for the operator
\begin{equation}\label{Phi1}
\Phi_{}(x):=
\begin{bmatrix*}[l]
t\mapsto&\alpha+\displaystyle\int_0^tG(v_s,\mu)\dd s-t\int_0^1G(v_s,\mu)\dd s\\[1ex]
&\alpha + \displaystyle\int_0^1G(v_s,\mu)\dd s\\[1ex]
&\mu+R_\mathrm{aff}[v,\mu]
\end{bmatrix*}\mbox{\quad for }x=\begin{bmatrix}
    v(\cdot)\\\alpha\\\mu
\end{bmatrix}\mbox{,}
\end{equation}
where $v$ is $1$-periodic with $v(t)\in\R^{n_y}$, $\alpha\in\R^{n_y}$,
$\mu\in\R^{n_\mu}$. 
Fixed-point problem \eqref{gen:fixedpoint}
%\begin{align}
%  \label{gen:fixedpoint}
%  x=\Phi(x)
%\end{align}
is then equivalent to the original problem of finding a periodic solution of FDE~\eqref{intro:fde} in the sense that for every periodic solution $y(t)$ with period $T>0$ of \eqref{intro:fde} at parameter $p$, there exists a phase shift $\theta\in\R$ such that $x=(t\mapsto y(\theta+tT),y(\theta),T,p)$ is a solution of \eqref{gen:fixedpoint}, and, vice versa, for a fixed point $x=(v,\alpha,\mu)=(v,\alpha,T,p)$ of $\Phi$ the function $y(t)=v(t/T)$ is a periodic solution of \eqref{intro:fde} with period $T$ at parameter $p$, $\alpha$ must be equal to $v(0)$, and $(v,T,p)$ satisfy the constraints $R_\mathrm{aff}[v,T,p]$.

\paragraph{Notation for function spaces and norms}
The definition of $\Phi$ in \eqref{Phi1} had not specified the function space for the $v$ component of $x$ yet as we have not discussed what type of perturbation a discretization may introduce  for $\Phi$.
To specify suitable spaces we use the notation
\begin{align*}
  C^{k\phantom{,1}}_\pi&\!=\{v:\mbox{\ $k\times$\,cont.\,diff.,\,} v(t)=v(t+1)\mbox{\ for all $t$}\},& 
\|v\|_{k\phantom{,1}}&\!=\max_{t\in[0,1],j\leq k}|v^{(j)}(t)|,\\
  C^{k,1}_\pi&=\{v\in C^k_\pi,\lip v^{(k)}<\infty\},& 
\|y\|_{k,1}&\!=\max\{\|v\|_k,\lip v^{(k)}\},\\
L^\infty_\pi&=\{v\mbox{\ ess.bd.,\ } v(t)=v(t+1)\mbox{\ for all $t$}\},& 
\|v\|_\infty\ &=\operatorname{ess\,sup}_{t\in[0,1]}|v(t)|.
\end{align*}
The dimension of the function's value $v(t)$ is determined by
context such that we often drop domain and codomain indicators in the
spaces. Otherwise we write, e.g., $C^{k,j}_\pi(\R^{n_u})$ or $L^\infty_\pi(\R^{n_u})$.
\paragraph{Linear and nonlinear part of fixed-point problem $x=\Phi(x)$}
The variable $x$ has several components, $x=(v,\alpha,\mu)$,
introduced above, of which only the first one, $v$, is
infinite-dimensional such that all norms of spaces for $v$ can be trivially
extended by the finite-dimensional maximum norms of $\alpha$ and $\mu$. Hence, % When referring to
% a component of $x$, we will use the component's oneIn the following,
% we redefine $G$ as a map $C^{0,1}_\pi\times\R\to C^0_\pi$ through
% $G(v,T):=G_\textrm{old}(v\circ s_T)$, where $G_\textrm{old}$ is the
% $G$ in \eqref{bvp1}. Moreover, we will consider a more general
% $\mu\in\R^{n_\mu}$ which accounts for possible model parameters other
% than the period $T$ (e.g., constant delays). As a result, the phase
% condition $p$ will be replaced by a more general
% $R_{\mu}:L^{\infty}_\pi\times\R^d\times\R^{n_\mu}\to\R^{n_\mu}$ which
% also includes pseudoarclength conditions. We
we define the extended
spaces
\begin{align*}
  C^{k,j}_\mathrm{e}&=C^{k,j}_\pi\times\R^{n_y}\times\R^{n_\mu},&
  L^\infty_\mathrm{e}&=L^\infty_\pi\times\R^{n_y}\times\R^{n_\mu},
\end{align*}
and continue to use the $\|\cdot\|_{k,j}$ or $\|_\cdot\|_\infty$
notation for their respective norms. We split the operator $\Phi$,
defined in \eqref{Phi1}, into a compact linear part $\mathcal{L}$ and
a nonlinear part $g$, such that $\Phi=\mathcal{L}\circ g$. For
$\mathcal{L}$ and $g$ we can now pick specific spaces:
\begin{align}\label{g_ext}
  g&:C^{0,1}_\mathrm{e}\to C^0_\mathrm{e}\mbox{,\quad with}&
  g
  \left(\begin{bmatrix}
      v\\\alpha\\\mu
    \end{bmatrix}\right)
  &=
  \begin{bmatrix}
    t\mapsto G(v_t,\mu)\\
    \alpha\\
    \mu+R_\mathrm{aff}[v,\mu]
  \end{bmatrix}\mbox{,\quad and}\\
  \mathcal{L}&:
  \begin{aligned}[t]
    &L^{\infty}_\mathrm{e}\to C^{0,1}_\mathrm{e}\\
    \makebox[0pt]{or\qquad}&C^\ell_\mathrm{e}\to C^{\ell+1}_\mathrm{e}
  \end{aligned}
  \mbox{,\quad with}&
  \label{integral_ext}
  \mathcal{L}
  \begin{bmatrix}
    w\\\alpha\\\nu
  \end{bmatrix}\hspace*{0.8em}
  &=\begin{bmatrix}
    \displaystyle t\mapsto \alpha+\int_0^tw(s)\dd s-t\int_0^1w(s)\dd s \\[2mm]
    \alpha+\displaystyle\int_0^1w(s)\dd s\\
    \nu
  \end{bmatrix}.
\end{align}
Convergence Theorem~\ref{thm:conv} for fixed-point problem \eqref{gen:fixedpoint} proves that the fixed
point $x^L$ of a discretized operator $\Phi_L$ converges with the rate
expected by the order of the discretization scheme to a fixed point
$x^*$ of $\Phi$ under appropriate conditions. This convergence will
operate on a small ball $B^{0,1}_r(x^*)$ of Lipschitz continuous
functions in $C^{0,1}_\mathrm{e}$ around $x^*$. The center $x^*$ has higher-order
regularity than $C^{0,1}_\mathrm{e}$: $x^*\in C^{\lm+1}_\mathrm{e}$, where $\lm\geq0$ depends on
regularity assumptions on the right-hand side $G$.

\paragraph{Discretized fixed-point problem}
For functions $z\in C^0_\pi$ we define the interpolation
projection $P_Lz$ as the unique piecewise
polynomial on mesh $(t_i)_{i=0}^L$ of degree $m-1$ where the piece on each interval $(t_{i-1},t_i)$ equals $z$ on the nodes $t_{i,j}$:
\begin{align*}
  P_L:&\phantom{=\ }C^0_\pi\mapsto L^\infty_\pi,\\
  P_Lz&=\hat{z}\quad \begin{aligned}[t]
    &\mbox{with $\hat{z}(t_{i,j})=z(t_{i,j})$ for all $i\in\{1,\ldots,L\},j\in\{1,\ldots,m\}$,}\\
    &\mbox{and\ $\hat{z}$ is degree $m-1$ polynomial on $(t_{i-1},t_i)$ for all $i\in\{1,\ldots,L\}$.}
\end{aligned}
\end{align*}
In the name $P_L$ we do not indicate the dependence on the interpolation degree $m-1$ as we will keep this degree constant, studying only the limit $L\to \infty$ in our convergence analysis. The dependence on the mesh $(t_i)$, which will change with increasing $L$, is also implicitly included in the subscript $L$.
The interpolating piecewise polynomial $t\mapsto [P_Lz](t)$ is not necessarily
continuous as it may have discontinuities at the mesh boundaries
$t_i$, such that the codomain of $P_L$ is $L^\infty_\pi$. By construction of $\Phi=\mathcal{L}\circ g$ and $P_L$ we have the following equivalence.
\begin{lemma}[Equivalence of discretized fixed point problem]\label{thm:disc:fp}
  The discretized BVP \eqref{res:bvp:disc} is equivalent to the fixed point problem
  \begin{align}
    \label{thm:disc:fp:eq}
    x&=\mathcal{L}\mathcal{P}_Lg(x)=:\Phi_L(x),
  \end{align}
  where $\mathcal{L}:L^\infty_\mathrm{e}\to C^{0,1}_\mathrm{e}$ and
  $g:C^{0,1}_\mathrm{e}\to C^0_\mathrm{e}$ are defined in \eqref{g_ext}
  and \eqref{integral_ext}, and $\mathcal{P}_L$ is the trivial extension of $P_L$,
    \begin{align}\label{def:interp:PL}
    \mathcal{P}_L:C^0_\mathrm{e}\to
       L^\infty_\mathrm{e},\qquad
    \mathcal{P}_L
    \begin{bmatrix}
      w\\\alpha\\\nu
    \end{bmatrix}
    =
       \begin{bmatrix}
         P_Lw\\
         \alpha\\
         \nu
       \end{bmatrix}.
  \end{align}
\end{lemma}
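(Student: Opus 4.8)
The plan is to expand the fixed-point equation $x=\Phi_L(x)$ componentwise and to match the result term by term against the collocation equations \eqref{res:bvp:disc}. Writing $x=(v,\alpha,\mu)$ with $\mu=(T,p)$ and abbreviating $w:=P_L(t\mapsto G(v_t,\mu))$, the definitions \eqref{g_ext}, \eqref{integral_ext} and \eqref{def:interp:PL} give that $x=\mathcal{L}\mathcal{P}_Lg(x)$ is equivalent to the three conditions
\begin{align*}
  v(t)&=\alpha+\int_0^t w(s)\dd s-t\int_0^1 w(s)\dd s\quad\text{for all }t,\\
  \alpha&=\alpha+\int_0^1 w(s)\dd s,\\
  \mu&=\mu+R_\mathrm{aff}[v,\mu].
\end{align*}
The last two reduce to $\int_0^1 w(s)\dd s=0$ and $R_\mathrm{aff}[v,\mu]=0$, and with $\int_0^1 w=0$ the first becomes $v(t)=\alpha+\int_0^t w(s)\dd s$. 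Thus a fixed point $v$ is necessarily a continuous, $1$-periodic piecewise polynomial of degree $m$ with $v(0)=\alpha$, $v'=w$ off the mesh, and the periodicity $v(1)=v(0)$ encoded precisely by $\int_0^1 w=0$.

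First I would record the identification implied by \eqref{gen:Gmu}: for $\mu=(T,p)$ one has $G(v_t,\mu)=TG_\mathrm{FDE}(v(t+(\cdot)/T),p)$, so that evaluating $t\mapsto G(v_t,\mu)$ at a collocation node $t_{i,j}$ reproduces exactly the right-hand side $T^LG_\mathrm{FDE}(y^L(t_{i,j}+(\cdot)/T^L),p^L)$ of \eqref{res:bvp:disc} under $v=y^L$, $T=T^L$, $p=p^L$. The central step is the degree-counting equivalence linking the pointwise collocation equations to the condition $v'=w$. On each interval $(t_{i-1},t_i)$ the derivative of the degree-$m$ piece $v$ is a polynomial of degree $m-1$, while $w=P_L(t\mapsto G(v_t,\mu))$ is, by definition of $P_L$, the degree-$(m-1)$ polynomial agreeing with $t\mapsto G(v_t,\mu)$ at the $m$ nodes $t_{i,1},\dots,t_{i,m}$. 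Since a polynomial of degree $m-1$ is determined by its values at these $m$ distinct points, $v'=w$ on the whole interval if and only if $v'(t_{i,j})=w(t_{i,j})=G(v_{t_{i,j}},\mu)$ for $j=1,\dots,m$, which is exactly the collocation equation of \eqref{res:bvp:disc} on that interval.

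With this equivalence both implications follow. For fixed point $\Rightarrow$ BVP solution, a fixed point $(v,\alpha,\mu)$ yields via the reduced conditions a continuous $1$-periodic piecewise polynomial $v=y^L$ of degree $m$; differentiating and evaluating at the nodes gives the collocation equations by the degree-counting step, and $R_\mathrm{aff}[v,\mu]=0$ supplies the affine constraints. For the converse, given a BVP solution $(y^L,T^L,p^L)$ I set $v=y^L$, $\mu=(T^L,p^L)$, $\alpha=v(0)$; the collocation equations force $v'=w$ on every interval, integration gives $v(t)=\alpha+\int_0^t w$, periodicity of $y^L$ gives $\int_0^1 w=v(1)-v(0)=0$ which is the second condition and also restores the subtracted-average term in the first, and the affine constraints give the third, so $x=(v,\alpha,\mu)$ is a fixed point. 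The only genuine obstacle is the degree-counting step; it hinges on matching the number $m$ of collocation nodes per interval to the dimension $m$ of the space of degree-$(m-1)$ polynomials and on the distinctness of the nodes $t_{\mathrm{c},j}$, while all remaining assertions follow ``by construction'' of $\mathcal{L}$, $g$ and $P_L$.
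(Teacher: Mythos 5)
Your proof is correct: the componentwise expansion of $x=\mathcal{L}\mathcal{P}_Lg(x)$, the reduction of the second and third components to $\int_0^1 w(s)\dd s=0$ and $R_\mathrm{aff}[v,\mu]=0$, and the degree-counting step (two degree-$(m-1)$ polynomials on each subinterval that agree at the $m$ distinct collocation nodes must coincide, so $v'=w$ holds on $(t_{i-1},t_i)$ if and only if the collocation equations hold there) establish the equivalence in both directions. The paper gives no explicit proof, asserting the lemma ``by construction of $\Phi=\mathcal{L}\circ g$ and $P_L$,'' and your argument is precisely the computation that assertion leaves implicit, so your approach coincides with the paper's own.
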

\begin{remark}[Discretized solution space]\label{rem:solspace}
  In our notation the discretized fixed-point operator $\Phi_L=\mathcal{L}\circ \mathcal{P}_L\circ g$ has the codomain 
  \begin{align*}
    \rg\mathcal{L}\mathcal{P}_L=\{\mbox{$p\in C^{0,1}_\pi$, $p\vert_{[t_{i-1},t_i]}$ $m$-degree poly.\ for $i=1,\ldots,L$}\}\times\R^{n_y}\times\R^{n_\mu}\subset C^{0,1}_\mathrm{e}.
  \end{align*}
  In particular, if $x=(v,\alpha,\mu)\in\rg\mathcal{L}\mathcal{P}_L$,
  then its first component $v\in C^{0,1}_\pi$ is Lipschitz continuous, but cannot be expected to be
  continuously differentiable.
\end{remark}
\paragraph{Example} We illustrate Remark~\ref{rem:solspace} for our rescaled example  \eqref{intro:dde:ex:rescaled}.  The first component of $Dg(x)\delta^x$ in $x=(y,T,p)$ and $\delta^x=(\delta^y,\delta^T,\delta^p)$ is
a function of time that has the form
$D\gts(y,T,p)[\delta^y,\delta^T,\sigma^p]$ and is given in
\eqref{intro:DG:ex}. It contains the term
$y'(t-p/T-y(t)/T)$, which is not defined for all $t$ if
$y\in C^{0,1}_\pi$. In particular, when attempting to evaluate
$D\gts(y,T,p)$ for a function $y$ in the discrete solution space
$\rg\mathcal{L}\mathcal{P}_L$, one encounters an ill-defined term
whenever a mesh point $t_k$ and a collocation point $t_{i,j}$ satisfy
\begin{align*}
  t_{i,j}-p/T-y(t_{i,j})/T-t_k\in\Z\mbox{\quad for some $i,k\in\{1,\ldots,L\}$, $j\in\{1,\ldots,m\}$,}
\end{align*}
because in the mesh boundary points $t_k$ the right-sided derivative
and the left-sided derivative are generally different (see
\eqref{res:bvp:disc} for introduction of collocation points $t_{i,j}$
and mesh points $t_k$).

\section{Mildly differentiable nonlinear functionals}
\label{sec:mild}

\paragraph{Consequences of mild differentiability for nonlinearities of DDEs}
Let $G:C^0_\pi\mapsto\R^{n_G}$ be mildly differentiable to order $\lm\geq1$. We define $  \gts:C^0_\pi\to C^0_\pi$ as% in the proof of Lemma~\ref{thm:disc:fp},
\begin{align}
  \label{def:gts}
  \gts(u)(t)=G(u_t)
\end{align}
(domain and codomain have possibly different dimensions $n_u$ and
$n_G$), which combines the nonlinear functional $u\mapsto G(u)$ with
the time shift $u\mapsto u_t$.  The map $g$ in our fixed-point problem
\eqref{Phi1} is of this type in its first component,
$(v,\alpha,\mu)\mapsto [t\mapsto G(v_t,\mu)]$.  The continuity of
$\gts(u)$ in time and the continuity of $\gts$ in $u$ in the
$C^0$-norm follow from the continuity of $(t,u)\mapsto u_t$ and
$G:C^0_\pi\to\R^{n_G}$.  The following Lemmas collect relevant
conclusions for this type of nonlinearity, which will be needed to
obtain results on the regularity of a fixed point $x^*$ of
$\Phi$ as well as the convergence of the fixed point $x^L$ of the
discrete version $\Phi_L$ to $x^*$.
\begin{lemma}[Extended differentiability of nonlinearity with time shift]\label{thm:diff:shift}\label{thm:p:g:diff}
  Assume that $G:C^0_\pi\to\R^{n_G}$ satisfies mild differentiability
  to at least order $\lm$ according to Definition~\ref{def:mild} and
  define $\gts(u)(t)=G(u_t)$ for $u\in C^0_\pi$. Then, for
  $\ell\leq\lm$ the map $\gts:C^{\lm}_\pi\to C^{\lm-\ell}_\pi$ is
  $\ell$ times continuously differentiable \textup{(}for $\ell=0$
  continuous\textup{)}. The map
    \begin{align*}
      C^\lm_\pi\times(C^{\lm-1}_\pi)^\ell\ni (u,\delta^{u,1},\ldots,\delta^{u,\ell})\mapsto D^\ell \gts(u)\delta^{u,1}\ldots \delta^{u,\ell}\in C^{\lm-\ell}_\pi
    \end{align*}
    is continuous.
  \end{lemma}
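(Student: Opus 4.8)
The plan is to establish both assertions simultaneously from the single explicit formula
\begin{align*}
  D^\ell\gts(u)[\delta^1,\dots,\delta^\ell](t)=D^\ell G(u_t)[\delta^1_t,\dots,\delta^\ell_t],\qquad \ell\le\lm,
\end{align*}
where on the right $D^\ell G(u_t)$ denotes the continuous extension to $C^{\ell-1}$-arguments furnished by the extendability clause of Definition~\ref{def:mild}, made multilinear by polarizing the diagonal $D^\ell G(u)(\delta^u)^\ell$. I would first isolate the only two structural facts the argument needs. The first is the regularity of the translation group: for $w\in C^k_\pi$ the curve $t\mapsto w_t$ is $i$ times continuously differentiable as a map into $C^{k-i}_\pi$ for every $i\le k$, its $i$-th derivative being $t\mapsto(w^{(i)})_t$; equivalently, differentiating in the shift parameter costs exactly one derivative of $w$. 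The second is that, after polarization, $D^j G(u_t)$ is a bounded symmetric $j$-linear form in arguments of class $C^{j-1}$, depending continuously on its base point $u_t\in C^{j}_\pi$.

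First I would show the right-hand side lands in $C^{\lm-\ell}_\pi$, which settles the case $\ell=0$ and supplies the codomain for all $\ell$. Differentiating $t\mapsto D^\ell G(u_t)[\delta^1_t,\dots,\delta^\ell_t]$ repeatedly in $t$ and using the translation regularity produces, by the Fa\`a di Bruno / product rule, a finite sum of terms $D^{\ell+r}G(u_t)[\dots,(\delta^{i})^{(b_i)}_t,\dots,(u^{(c_j)})_t,\dots]$ in which each $u$-slot differentiated in $t$ spawns an extra argument $(u^{(c)})_t$, $c\ge1$. The heart of this step is a degree count: after at most $\lm-\ell$ differentiations one has $\sum_i b_i+\sum_j c_j\le\lm-\ell$ with $c_j\ge1$, and a short computation then gives $b_i\le\lm-\ell-r$ and $c_j\le\lm-\ell-r+1$, so that every argument carries at least $\ell+r-1$ derivatives. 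That is precisely the regularity threshold at which the extended form $D^{\ell+r}G(u_t)$ is defined; hence each term is admissible and its continuity in $t$ is inherited from continuity of the extended multilinear forms together with strong continuity of $t\mapsto u_t$. This count is exactly where the one-derivative gain of extendability is indispensable: it is what lets the directions live in $C^{\lm-1}$ while losing only the expected $\ell$ derivatives overall.

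Next I would upgrade the pointwise formula to genuine Fr\'echet differentiability in $u$ and prove the continuity assertion. Boundedness and linearity of $\delta\mapsto D^\ell\gts(u)\delta$ as a map $C^{\lm-1}_\pi\to C^{\lm-\ell}_\pi$ follow from the $j$-linear bounds above, uniform in $t$ because $\{u_t:t\in\R\}$ is precompact. For the derivative itself I would write the increment by the integral form of Taylor's theorem valid on $C^\lm$,
\begin{align*}
  \gts(u+\delta)(t)-\gts(u)(t)=\int_0^1 DG(u_t+\sigma\delta_t)[\delta_t]\dd\sigma,
\end{align*}
subtract the candidate derivative, and estimate the remainder in the $C^{\lm-1}_\pi$ norm; the analogous expansion with $D^\ell$ handles higher orders. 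Continuity of $(u,\delta^1,\dots,\delta^\ell)\mapsto D^\ell\gts(u)[\delta^1,\dots,\delta^\ell]$ with directions in $C^{\lm-1}_\pi$ then falls out of the same estimates once the extended forms are known to be jointly continuous on the relevant sets.

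The main obstacle is this last remainder estimate in the $C^{\lm-\ell}_\pi$ norm rather than merely in $C^0$: one must control the difference quotient \emph{together with all its $t$-derivatives up to order $\lm-\ell$}, uniformly in $t$. The resolution is that each such $t$-derivative is again a sum of extended multilinear forms evaluated at base points $u_t+\sigma\delta_t$ and at translated arguments, and the families $\{u_t\}$, $\{(u^{(c)})_t\}$, $\{(\delta^{i})^{(b)}_t\}$ are precompact in the relevant $C^{k}_\pi$ spaces by Arzel\`a--Ascoli. Uniform continuity of the extended forms on these precompacta then turns the pointwise convergence of the difference quotients into convergence in $C^{\lm-\ell}_\pi$, closing both claims.
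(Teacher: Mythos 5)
Your proposal is correct and follows essentially the same route as the paper's proof in Appendix~\ref{sec:proof:diff:shift}: the polarization identity (Proposition~\ref{thm:pol}, giving Lemma~\ref{thm:mild:ext0}) supplies the multidirectional extension, and your repeated $t$-differentiation with the degree count $b_i\le\lm-\ell-r$, $c_j\le\lm-\ell-r+1$ is exactly what the paper's induction over $k$ carries out one derivative at a time, with the mean-value theorem justifying differentiation of the base slot (producing the extra argument $x_t'$ in $D^{\ell+1}G(x_t)x_t'\prod y^i_t$) and continuity of the extended forms on translate orbits handling the limits. The only organizational difference is that the paper's induction rigorously justifies, step by step, the Fa\`a di Bruno expansion you write down all at once, while your sketch additionally outlines the Fr\'echet remainder estimate via Taylor's theorem, which the paper leaves implicit.
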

  See Appendix~\ref{sec:proof:diff:shift} 
  for a detailed proof for this lemma. The case $\lm=1$ for
  Lemma~\ref{thm:diff:shift} illustrates where the
  extendability condition for $G$ in the definition of mild
  differentiability is needed for differentiability of $\gts(u)$ in
  time.  Formally, one ``applies the chain rule'', differentiating
  $G(u_t)$ with respect to $t$ with $u\in C^1_\pi$ (for $k=1$), such
  that $(\gts(u))'=[G(u_t)]'=D G(u_t)u_t'$, but since
  $(u_t)'\in C^0_\pi$ the expression $D G(u_t)u_t'$ is only valid and
  continuous because of the continuous extendability of
  $DG(u_t)$ to $C^0_\pi$ (in which $(u_t)'$ lies). This is
  point~\ref{def:mild:ext} in Definition~\ref{def:mild}. The general
  case ($\lm>1$) then applies this argument repeatedly. In particular,
  we see that $\gts$ is only continuously differentiable $\ell$ times
  from a higher-regularity space $C^\lm_\pi$ to the lower-regularity space $C^{\lm-\ell}_\pi$.

We observe that the finite difference limit can be extended to Lipschitz continuous deviations (formulated and needed only for $\ell=1$) by continuity:
\begin{corollary}[Extension of finite-difference limit]\label{thm:findiff}
  Assume that $G:C^0_\pi\to\R^{n_G}$ is mildly differentiable once. Then the map
  $C^1_\pi\times C^0_\pi\ni (u,\delta^u)\mapsto D\gts(u)\delta^u\in
  C^0_\pi$ defined as in \eqref{def:gts} satisfies for all $u\in C^1_\pi$
  \begin{equation}\label{thm:gcontdiff}
    \lim_{
      \begin{subarray}{c}
        v,w\in C^{0,1}_\pi\\[0.2ex]
        \|v\|_{0,1},\|w\|_{0,1}\to0
      \end{subarray}
    }\cfrac{\|\gts(u+v)-\gts(u+w)-
      D\gts(u)[v-w]\|_0}{\|v-w\|_{0,1}}=0\mbox{.}
  \end{equation}
  % \end{enumerate}
\end{corollary}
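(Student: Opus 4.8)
The plan is to reduce \eqref{thm:gcontdiff} to a mean-value estimate valid for smooth increments and then pass to merely Lipschitz increments by mollification. Throughout, $u\in C^1_\pi$ is fixed and I abbreviate $\Omega(v,w):=\gts(u+v)-\gts(u+w)-D\gts(u)[v-w]$, and I use $D\gts(u)[\delta](t)=DG(u_t)[\delta_t]$. First I would treat increments $v,w\in C^1_\pi$, so that the whole segment $u+w+s(v-w)$, $s\in[0,1]$, lies in $C^1_\pi$, where $\gts$ is continuously differentiable by Lemma~\ref{thm:diff:shift} (case $\ell=\lm=1$). The fundamental theorem of calculus then gives, as an identity in $C^0_\pi$, $\Omega(v,w)=\int_0^1\big(D\gts(u+w+s(v-w))-D\gts(u)\big)[v-w]\dd s$, i.e. pointwise $\Omega(v,w)(t)=\int_0^1\big(DG((u+w+s(v-w))_t)-DG(u_t)\big)[(v-w)_t]\dd s$.

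The decisive step is to bound this uniformly in $t$ by $o(\|v-w\|_{0,1})$. I would normalise the direction, writing $\hat e_t:=(v-w)_t/\|v-w\|_{0,1}$ (assuming $v\neq w$, the case $v=w$ being trivial). The key observation is that $\|\hat e_t\|_{0,1}\le1$ for every $t$, so \emph{all} normalised directions, for all admissible $v,w$ and all $t$, lie in the unit ball of $C^{0,1}([-\tau_{\max},0])$, which by Arzel\`a--Ascoli is one \emph{fixed} compact subset $\mathcal K$ of $C^0([-\tau_{\max},0])$. Likewise $\mathcal U:=\{u_t:t\in[0,1]\}$ is compact in $C^1$, and the base-point shifts $\zeta_{s,t}:=(w+s(v-w))_t$ satisfy $\|\zeta_{s,t}\|_1\le\rho:=\|w\|_{0,1}+\|v-w\|_{0,1}$, using $\|\cdot\|_1=\|\cdot\|_{0,1}$ on $C^1$. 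Hence $\|\Omega(v,w)\|_0/\|v-w\|_{0,1}\le h(\rho)$, where $h(\rho):=\sup\{\,|(DG(a+\zeta)-DG(a))[\psi]| : a\in\mathcal U,\ \psi\in\mathcal K,\ \|\zeta\|_1\le\rho\,\}$.

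It remains to show $h(\rho)\to0$ as $\rho\to0$, and this is where I expect the only genuine work. I would argue by contradiction using the joint continuity of $(u,\delta)\mapsto DG(u)[\delta]$ on $C^1\times C^0$ furnished by the extendability clause of Definition~\ref{def:mild} (equivalently by Lemma~\ref{thm:diff:shift}): if $h(\rho_k)\ge\varepsilon$ along some $\rho_k\to0$, compactness of $\mathcal U\times\mathcal K$ lets me extract near-maximisers $a_k\to a_*$ in $C^1$ and $\psi_k\to\psi_*$ in $C^0$ with $\|\zeta_k\|_1\le\rho_k\to0$, whereupon joint continuity forces $(DG(a_k+\zeta_k)-DG(a_k))[\psi_k]\to0$, a contradiction. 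The crucial and non-obvious point is that measuring the increment in the Lipschitz norm $\|\cdot\|_{0,1}$ rather than in $\|\cdot\|_1$ is exactly what makes the set of normalised directions compact in $C^0$; this is the structural reason why mild (rather than Fr\'echet) differentiability suffices, and it is the step I would be most careful to justify, since joint continuity alone does \emph{not} give operator-norm continuity of $u\mapsto DG(u)$ on $C^0$.

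Finally I would drop the smoothness assumption on $v,w$. Given $v,w\in C^{0,1}_\pi$, I mollify both with a common periodic kernel to obtain $v_n,w_n\in C^1_\pi$ with $v_n\to v$, $w_n\to w$ in $\|\cdot\|_0$ and $\|v_n\|_{0,1}\le\|v\|_{0,1}$, $\|w_n\|_{0,1}\le\|w\|_{0,1}$, $\|v_n-w_n\|_{0,1}\le\|v-w\|_{0,1}$. The $C^1$ estimate gives $\|\Omega(v_n,w_n)\|_0\le h(\rho)\,\|v-w\|_{0,1}$ with $\rho$ as above. Letting $n\to\infty$, continuity of $\gts$ on $C^0_\pi$ and of the extended $D\gts(u):C^0_\pi\to C^0_\pi$ (both from Lemma~\ref{thm:diff:shift}) yield $\Omega(v_n,w_n)\to\Omega(v,w)$ in $\|\cdot\|_0$, so $\|\Omega(v,w)\|_0\le h(\rho)\,\|v-w\|_{0,1}$. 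Since $\rho\to0$ and hence $h(\rho)\to0$ as $\|v\|_{0,1},\|w\|_{0,1}\to0$, this is exactly \eqref{thm:gcontdiff}.
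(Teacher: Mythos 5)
Your proof is correct, and it supplies exactly the details that the paper leaves implicit: the paper gives no displayed proof of Corollary~\ref{thm:findiff}, justifying it only by the remark that the finite-difference limit extends ``by continuity'' from $C^1$ to $C^{0,1}$ deviations because $DG$ is continuously extendable to $C^0_\pi$ in its linear argument (part~\ref{def:mild:ext} of Definition~\ref{def:mild}, equivalently Lemma~\ref{thm:diff:shift} with $\ell=\lm=1$). Your elaboration is the natural rigorous implementation of that sketch, and the two ingredients you add beyond it are the right ones. First, normalizing by the $\|\cdot\|_{0,1}$-norm places all shifted directions $(v-w)_t/\|v-w\|_{0,1}$ in the closed unit ball of $C^{0,1}_\pi$, which is compact in $C^0_\pi$ by Arzel\`a--Ascoli; combined with the joint continuity of $(x,\psi)\mapsto DG(x)\psi$ on $C^1_\pi\times C^0_\pi$, your compactness--contradiction argument upgrades pointwise continuity to the uniform smallness $h(\rho)\to0$, and you correctly identify that this is indispensable, since mild differentiability does \emph{not} give operator-norm continuity of $x\mapsto DG(x)$ on $C^0_\pi$. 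Second, the mollification step is genuinely needed and correctly executed: $C^1_\pi$ is not dense in $C^{0,1}_\pi$ under the Lipschitz norm, so one cannot pass to Lipschitz increments by density alone; your convolution bounds $\|v_n\|_{0,1}\le\|v\|_{0,1}$ and $\|v_n-w_n\|_{0,1}\le\|v-w\|_{0,1}$, together with continuity of $\gts$ on $C^0_\pi$ and of $\delta\mapsto D\gts(u)\delta$ in the $C^0$-topology, close the gap using only $C^0$-convergence of the mollified increments, with monotonicity of $h$ preserving the bound. Two cosmetic points, neither affecting correctness: in the periodic setting of Section~\ref{sec:mild} the shifts $u_t$, $(v-w)_t$ live in $C^0_\pi$ rather than $C^0([-\tau_{\max},0])$, and shifting is isometric there, which is what makes $\|\hat e_t\|_{0,1}\le 1$ and $\|\zeta_{s,t}\|_1\le\rho$ immediate; and the fundamental-theorem identity for $C^1$ increments is licensed by the continuous differentiability of $\gts\colon C^1_\pi\to C^0_\pi$ from Lemma~\ref{thm:diff:shift}, which you do invoke.
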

\noindent
Note that in the finite-difference quotient limit
\eqref{thm:gcontdiff}, we extend from the $C^1$-norm to the
$C^{0,1}$-norm. This is possible because the derivative $DG$ of $G$
(and, hence, the derivative $D\gts$ of $\gts$) is continuously
extendable to $C^0_\pi$ in its linear argument.

\

The regularity properties outlined in this section imply, in particular, that we cannot  rely on the general framework by \cite{mas15NM} to prove the convergence of the numerical method, in contrast to \citep{andoSIAM2020}, which studied the constant delay case. In other words, it is not possible to prove all the (theoretical and numerical) assumptions  made in \citep{mas15NM} to reach our convergence result. In particular, some assumptions %Rather, as will be clear from sections \ref{sec:mild}-\ref{sec:convergence}, some of them 
will only hold in a weaker form, with different norms in the relevant inequalities than those in the general framework in \citep{mas15NM}. In section \ref{sec:convergence} we will show that this does not impede high-order convergence for polynomial collocation methods, such that we obtain convergence results identical to those for constant delays.

\section{Assumptions on the (infinite-dimensional) problem and immediate consequences}
\label{sec:assumptions}
This section reformulates Assumption~\ref{res:ass}, stated in
Section~\ref{sec:result} for the BVP, as assumptions on the
infinite-dimensional fixed-point problem \eqref{Phi1} for $\Phi$. These are fewer than stated for the general theory by \cite{mas15NM}, because Maset includes further assumptions needed for determining the convergence rate of the Newton iterations when
solving the discretized problem.
%We discuss the Newton iteration separately in section \ref{sec:newton}.
\subsection*{Existence and regularity of solution to infinite-dimensional problem} 
\begin{assumption}[Existence of solution]\label{ass:existence}The fixed-point problem \eqref{Phi1} has a solution
$x^*$\textup{:} $x^*=\Phi(x^*)$.
\end{assumption}
We denote the components of $x^*$ as $(v^*,\alpha^*,\mu^*)$. By construction of the fixed point problem, $v^*$ will
be Lipschitz continuous and periodic with period $1$, and
satisfy the differential equation
\begin{align}
(v^*)'(t)&=G(v^*_{t},\mu^*),\label{G:simple:DEv}
\end{align}
such that $v^*$ will even be in $C^1_\pi$. 
The Hopf bifurcation theorem, proved for functional differential equations with state-dependent equations in \cite{S12}, provides a scenario that ensures the existence of periodic orbits, and, hence, solutions of fixed-point problem~\eqref{Phi1}. The illustrative example~\eqref{intro:dde:ex} is chosen to satisfy the assumptions of the Hopf bifurcation theorem for $p\approx \pi/2$ and small-amplitude periodic solutions $y(t)$ with period $2\pi$ (hence, solutions $(y(\cdot),T,p)$ of \eqref{dde:ex1:rescaled}, \eqref{Raff:ex1} with small-amplitude $y$).
%We comment on prior results concluding the existence of periodic orbits, including the Hopf bifurcation theorem, in \cref{sec:ass:existence}.
\begin{assumption}[Mild differentiability of right-hand side $G$]\label{ass:mdiff}
  The right-hand side $G$ of the FDE \eqref{bvp1mu} is mildly
  differentiable to order $\lm\geq 1$.
\end{assumption}
As we include the parameters (treating constants as special cases of
$1$-periodic functions), the dimensions are $n_u=n_y+n_\mu=n_y+n_p+1$
for the argument of $G$ and $n_G=n_y$ for the value of $G$.

Lemma~\ref{thm:diff:shift} implies the following corollary about the
regularity of the solution $x^*$ of the fixed-point problem
\eqref{Phi1}, $x=\Phi(x)$.
\begin{corollary}[Regularity of solution of fixed-point problem \eqref{Phi1}]\label{thm:regularity:fp}
  Let $x^*=(v^*,\alpha^*,T^*)$ be a solution of the fixed-point
  problem \eqref{Phi1}, and let the right-hand side $G$ of the FDE
  \eqref{bvp1mu} satisfy mild differentiability to order $\lm$. Then
  the solution component $v^*$ is in $C^{\lm+1}_\pi$
  as a function of time, and, hence, $x^*\in C^{\lm+1}_\mathrm{e}$.
\end{corollary}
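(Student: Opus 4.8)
The plan is a standard elliptic-style regularity bootstrap that raises the smoothness of $v^*$ one derivative at a time: I would use the fixed-point equation to express $(v^*)'$ through $\gts(v^*)$, and then invoke Lemma~\ref{thm:diff:shift} to control the regularity of $\gts(v^*)$ at each level.

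First I would extract a clean differential relation. Writing $x^*=\Phi(x^*)$ componentwise for $\Phi$ in \eqref{Phi1}, the second row forces $\int_0^1 G(v^*_s,\mu^*)\dd s=0$, so the first row collapses to $v^*(t)=\alpha^*+\int_0^tG(v^*_s,\mu^*)\dd s$. This representation is automatically $1$-periodic (the endpoint values agree because the mean vanishes) and differentiable, with $(v^*)'(t)=G(v^*_t,\mu^*)=\gts(v^*)(t)$, which is exactly \eqref{G:simple:DEv}. As the first component of a fixed point, $v^*$ lies in $C^{0,1}_\pi\subset C^0_\pi$, and the continuity clause of Definition~\ref{def:mild} (the case $\ell=0$) gives $\gts(v^*)\in C^0_\pi$; the integral representation then already yields $v^*\in C^1_\pi$. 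This is the base case.

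For the induction step, suppose $v^*\in C^k_\pi$ with $1\le k\le\lm$. Since $G$ is mildly differentiable to order $\lm\ge k$, it is in particular mildly differentiable to order $k$ (conditions \ref{def:mild:rest} and \ref{def:mild:ext} for $\ell\le k$ are a subset of those for $\ell\le\lm$), so Lemma~\ref{thm:diff:shift}, applied with its order parameter set to $k$ and $\ell=0$, shows that $\gts$ maps $C^k_\pi$ continuously into $C^k_\pi$; hence $\gts(v^*)\in C^k_\pi$. The relation $(v^*)'=\gts(v^*)$ then gives $(v^*)'\in C^k_\pi$, that is $v^*\in C^{k+1}_\pi$. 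Iterating from $k=1$ up to $k=\lm$ produces $v^*\in C^{\lm+1}_\pi$, and since the remaining components $\alpha^*\in\R^{n_y}$ and $\mu^*\in\R^{n_\mu}$ are constants, they are trivially smooth, so $x^*\in C^{\lm+1}_\mathrm{e}$.

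The step I expect to require the most care is explaining why the bootstrap terminates exactly at $C^{\lm+1}$ and not beyond. At the top level $k=\lm$ we use that $\gts$ maps $C^\lm_\pi$ into $C^\lm_\pi$, which is the strongest conclusion that mild differentiability to order $\lm$ supplies through Lemma~\ref{thm:diff:shift}; to gain a further derivative one would need $\gts(v^*)\in C^{\lm+1}_\pi$, which would require $G$ to be mildly differentiable to order $\lm+1$. Thus the single derivative of $v^*$ over the regularity of $\gts(v^*)$ is precisely the smoothing produced by the integration in $\mathcal{L}$, and no more. I would also keep track that each antiderivative stays $1$-periodic — guaranteed by the vanishing-mean condition built into $\Phi$ — so that the iterates genuinely remain in the periodic spaces $C^k_\pi$ rather than merely $C^k$.
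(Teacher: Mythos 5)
Your proof is correct and follows essentially the same route as the paper: the paper likewise derives $(v^*)'=G(v^*_t,\mu^*)$ from the fixed-point structure and then bootstraps via Lemma~\ref{thm:diff:shift} with $\ell=0$, concluding $v^*\in C^{j+1}_\pi$ from $v^*\in C^j_\pi$ for each $j$ from $0$ to $\lm$. Your added care about applying the lemma at intermediate orders $k<\lm$ (since mild differentiability to order $\lm$ implies it to order $k$) and about periodicity being preserved by the vanishing-mean construction only makes explicit what the paper leaves implicit.
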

The statement of Corollary~\ref{thm:regularity:fp} follows from
Lemma~\ref{thm:diff:shift},  applied to the
case $\ell=0$, and using that $v^*$ satisfies the differential equation
\eqref{G:simple:DEv}: for each $j$ from $0$ to $\lm$ we have that
$G(v^*_t,\mu^*)$ is in $C^j_\pi$ because $v^*$ is in $C^j_\pi$ by
Lemma~\ref{thm:diff:shift}. Then by the differential equation
\eqref{G:simple:DEv} $(v^*)'(t)$ is in $C^j_\pi$, such that $v^*$ is
in $C^{j+1}_\pi$.

Hence, by Assumption~\ref{ass:mdiff} that $\lm\geq1$, we have that
$x^*\in C^{\lm+1}_\mathrm{e}\subseteq C^2_\mathrm{e}$.
% \begin{assumption}[Mild local Lipschtiz continuity of $DG$]\label{ass:dglip}
%   The derivative $DG$ of the right-hand side $G$ of the DDE
%   \eqref{bvp1mu} is mildly locally Lipschitz continuous in
%   $(v^*,\mu^*)$ in the sense of Definition~\ref{def:mildlip:dg}.
% \end{assumption}
% Assumption~\ref{ass:dglip} is automatically be implied if $G$ is mildly
% differentiable to order $\lm\geq2$.
Note that Assumption
\ref{ass:mdiff} of mild differentiability of $G$ with $\lm=1$,
together with Corollary \ref{thm:regularity:fp}, imply that the derivative
\begin{align*}
  C^1_\pi\ni x\mapsto D\Phi_{}(x)=\mathcal{L}Dg(x)\in\Lin(C^1_\pi;C^1_\pi)
\end{align*}
 of the fixed point map $\Phi$ defined in \eqref{Phi1} exists and is
continuous in $x=x^*$. This follows from the continuous
differentiability of $g$ as a map from $C^1_\mathrm{e}$ to
$C^0_\mathrm{e}$ and the subsequent application of the linear
continuous mapping
$\mathcal{L}\in \operatorname{Lin}(C^0_\mathrm{e}; C^1_\mathrm{e})$,
which increases regularity by one degree.

The last assumption that we make on the infinite-dimensional problem is the well-posedness of the system linearized around the fixed point, which will be needed to show stability of the discretized problem.
\begin{assumption}[Well-posedness of infinite-dimensional linear problem]\label{ass:linear:inv}
 The linear bounded operator $I-D\Phi_{}(x^{\ast})$ is injective on $C^1_\mathrm{e}$, that is, if $y=D\Phi(x^*)y$ and $y\in C^1_\mathrm{e}$ then $y=0$.
\end{assumption}
We denote the norm of the inverse
\begin{align}
  \label{def:Cstab:infty}
  C_{\mathrm{stab},\infty}=\|[I-D\Phi(x^*)]^{-1}\|_{C^1_\mathrm{e}\leftarrow C^1_\mathrm{e}}=\|[I-D\Phi(x^*)]^{-1}\|_{C^{0,1}_\mathrm{e}\leftarrow C^{0,1}_\mathrm{e}}.
\end{align}
Elements $y$ of the nullspace of $[I-D\Phi(x^*)]$ that are at least in
$C^0_\mathrm{e}$ satisfy the identity $y=\mathcal{L}Dg(x^*)y$, such
that they are in the image of $\mathcal{L}$ of $C^0_\mathrm{e}$, which
is in $C^1_\mathrm{e}$, since $\mathcal{L}$ involves an
anti-derivative. Hence, we may replace $C^1_\mathrm{e}$ in
Assumption~\ref{ass:linear:inv} by $C^0_\mathrm{e}$ (and, even
$L^\infty_\mathrm{e}$).

Since $D\Phi(x^*)$ is a compact linear operator on spaces
$C^{k,j}_\mathrm{e}$ for all $k\geq0$ and $j=0,1$, the nullspace of
$D\Phi(x^*)$ is at most finite-dimensional, and
$\dim\ker [I-D\Phi(x^*)]=0$ implies the existence of a bounded inverse
$[I-D\Phi(x^*)]^{-1}$. 
A sequence of results formulate Assumption~\ref{ass:linear:inv} in
terms of requiring full rank of a finite-dimensional \emph{characteristic} matrix. One example construction for a characteristic matrix of a linear time-periodic FDE is given by \cite{SS11}, 
% In practical examples, only the norm of $[I-\mathcal{L}\mathcal{P}_LDg(x)]^{-1}$ is available in iterates $x$ of the Newton iteration.
%Constructions of characteristic matrices for periodic linear FDEs have been 
generalized to FDEs of neutral type by \cite{VerduynLunel2023}. In practice numerical methods for determining eigenvalues of monodromy operators, such as the pseudospectral method \citep{blv22,breifac06,bmvsinum12}, work directly on large matrices generated by the discretization projection $\mathcal{P}_L$, without prior reduction. \cite{borgioli2020pseudospectral} proves convergence of methods as implemented in \textsc{DDE-Biftool} and \textsc{knut}.
\section{Convergence of solutions of discretized problem}
\label{sec:convergence}
In this section, we show that solutions $x$ of the fixed point problem
for the discretized map given in the Equivalence Lemma
\ref{thm:disc:fp},
\begin{align}
  \Phi_L=\mathcal{L}\mathcal{P}_Lg,\label{def:PhiL}
\end{align}
are locally unique, and converge to the fixed point $x^*$ of $\Phi$
(solving $x^*=\Phi(x^*)=\mathcal{L}g(x^*)$).  In $x^* $ (which is in
$C^2_\mathrm{e}\subset C^1_\mathrm{e}$) the derivative of $\Phi_L$ is
well defined and equal to
$D\Phi_L(x^*)=\mathcal{L}\mathcal{P}_LDg(x^*)$. The discretized fixed
point problem $\Phi_L(x)=x$ can be formulated in terms of
$\delta^x:=x-x^*$ as
\begin{align}\label{dfp2:split}
  \lefteqn{[I-D\Phi_L(x^*)]\delta^x=}\\
  =&\phantom{+}\mathcal{L}(\mathcal{P}_L-I)g(x^*)&&\mbox{(consistency term $\epsilon_\mathrm{c}(L)$)}\label{dfp2:consistency}\\
   &+\mathcal{L}\mathcal{P}_L[g(x^*+\delta^x)-g(x^*)-Dg(x^*)\delta^x]&&\mbox{(nonlinearity term $\epsilon_\mathrm{nl}(L,\delta^x)$).}\label{dfp2:nonlinearity}     
\end{align}
Given the left-hand side of \eqref{dfp2:split}, a necessary condition
for the sought well-posedness is the invertibility of the operator
$I-D\Phi_L(x^*)$ uniformly for $L\to\infty$, i.e., the stability of
the discretized problem. This is particularly evident in the case of a
linear $\Phi$, where $\epsilon_\mathrm{nl}(L,\delta^x)=0$ and
$\delta^x$ only appears in the left-hand side. Since
\begin{align*}
  [I-D\Phi_L(x^*)]\delta^x=&[I-D\Phi(x^*)]\delta^x-[D\Phi_L(x^*)-D\Phi(x^*)]\delta^x,
\end{align*}
the stability of the discretized problem is
determined by the invertibility of the original infinite-dimensional
problem --- guaranteed by Assumption \ref{ass:linear:inv} --- provided
that the linearization of the discretization in $x^*$ approximates the
linearization of the infinite-dimensional problem arbitrarily
well. Indeed, this approximation holds for large $L$, as the following
lemma states.
\begin{lemma}[Consistency of derivative of $\Phi_L$]
\label{thm:DPhi:consistency:alt}
Let $x^*=(v^*,\alpha^*,\mu^*)$ be a fixed point of $\Phi$, let
$G:C^0_\pi\to\R^{n_y}$ be mildly differentiable once.  Then there exists a monotone increasing continuous function $\omega^*:[0,\infty)\to[0,\infty)$ with $\omega^*(0)=0$, such that
\begin{align}
  \label{thm:DPhi:consistency:ineq:alt}
  \|D\Phi_L(x^*)-D\Phi(x^*)\|_{C^{0,1}_\mathrm{e}\leftarrow C^{0,1}_\mathrm{e}}&=\|\mathcal{L}[I-\mathcal{P}_L]Dg(x^*)\|_{C^{0,1}_\mathrm{e}\leftarrow C^{0,1}_\mathrm{e}}\leq \omega^*(1/L).
\end{align}
\end{lemma}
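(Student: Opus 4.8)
The equality in \eqref{thm:DPhi:consistency:ineq:alt} holds because $D\Phi_L(x^*)=\mathcal{L}\mathcal{P}_LDg(x^*)$ and $D\Phi(x^*)=\mathcal{L}Dg(x^*)$, so the difference equals $\mathcal{L}(\mathcal{P}_L-I)Dg(x^*)$ up to sign. The plan is then to exploit the smoothing of the integral operator $\mathcal{L}$ together with a compactness argument that turns the mere continuity (rather than Lipschitz continuity) of the linearised right-hand side into a \emph{uniform} modulus of continuity. First I would use that $\mathcal{P}_L$ acts as the identity on the two finite-dimensional components of $C^0_\mathrm{e}$, so that for $\delta^x=(\delta^v,\delta^\alpha,\delta^\mu)\in C^{0,1}_\mathrm{e}$ the difference collapses to the first component: writing $w:=[Dg(x^*)\delta^x]_1=D\gts(v^*,\mu^*)[\delta^v,\delta^\mu]\in C^0_\pi$, we have $[I-\mathcal{P}_L]Dg(x^*)\delta^x=((I-P_L)w,0,0)$. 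Applying the explicit form \eqref{integral_ext} of $\mathcal{L}$ to $(e,0,0)$ with $e:=(I-P_L)w$ yields a first component $t\mapsto\int_0^te-t\int_0^1e$ whose derivative is $e-\int_0^1e$; hence $\|\mathcal{L}(e,0,0)\|_{0,1}\le 2\|e\|_\infty$. The estimate thus reduces to bounding the sup-norm interpolation error $\|(I-P_L)w\|_\infty$ uniformly over the unit ball of $C^{0,1}_\mathrm{e}$.

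Next I would record that $\delta^x\mapsto w$ is linear and, by Lemma~\ref{thm:diff:shift} and Corollary~\ref{thm:findiff} applied with $\lm=\ell=1$, defines a bounded linear operator $A:C^0_\mathrm{e}\to C^0_\pi$; here I use that $v^*\in C^2_\pi\subset C^1_\pi$ by Corollary~\ref{thm:regularity:fp}, so that $v^*$ lies in the base space for which $D\gts(v^*,\mu^*)$ extends continuously to $C^0$ deviations. The crucial step is then an Arzel\`a--Ascoli argument: the closed unit ball $\bar B$ of $C^{0,1}_\mathrm{e}$ is equi-Lipschitz and bounded in its $v$-component (and lives in compact balls in the finite-dimensional components), hence relatively compact in $C^0_\mathrm{e}$; since $A$ is continuous on $C^0_\mathrm{e}$, its image $A(\bar B)$ is relatively compact in $C^0_\pi$ and therefore equicontinuous. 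This produces a single monotone continuous modulus $\tilde\omega$ with $\tilde\omega(0)=0$ such that $|w(t)-w(t')|\le\|\delta^x\|_{0,1}\,\tilde\omega(|t-t'|)$ for all $\delta^x$ and all $t,t'$.

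With a uniform modulus available I would finish with the standard piecewise-polynomial interpolation estimate. Since the degree $m-1$ is kept fixed, the Lebesgue constant $\Lambda_m$ of interpolation at the nodes $t_{\mathrm{c},j}$ on the reference interval is bounded independently of $L$; on each mesh subinterval of length at most $C_\mathrm{msh}/L$, cf.\ \eqref{res:msh:bound}, best approximation by a constant already has error $\le\tilde\omega(C_\mathrm{msh}/L)$, so $\|(I-P_L)w\|_\infty\le(1+\Lambda_m)\,\tilde\omega(C_\mathrm{msh}/L)\,\|\delta^x\|_{0,1}$. Combining this with the bound for $\mathcal{L}$ above and setting $\omega^*(s):=2(1+\Lambda_m)\tilde\omega(C_\mathrm{msh}s)$, which is monotone, continuous and vanishes at $0$, establishes \eqref{thm:DPhi:consistency:ineq:alt}.

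The main obstacle is exactly the uniformity of this modulus. Because $G$ is assumed mildly differentiable only once, $w=D\gts(v^*,\mu^*)[\delta^v,\delta^\mu]$ is in general only continuous and not Lipschitz in $t$ --- differentiating it would require a second derivative of $G$, which need not exist --- so no algebraic rate $L^{-k}$ is available and one is forced to work with an abstract modulus $\omega^*$. A direct estimate of $|w(t)-w(t')|$ would split off a term $[D\gts(v^*_t)-D\gts(v^*_{t'})]\delta^v_t$ that requires \emph{operator-norm} continuity of $u\mapsto D\gts(u)$, which mild differentiability does not supply; the compactness argument sidesteps this gap, needing only continuity of $A$ as a map $C^0_\mathrm{e}\to C^0_\pi$ together with the compactness of the $C^{0,1}$-ball in $C^0$.
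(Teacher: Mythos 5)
Your proposal is correct and follows essentially the same route as the paper's proof: boundedness of $Dg(x^*)\in\Lin(C^0_\mathrm{e};C^0_\mathrm{e})$ from mild differentiability, compactness of the $C^{0,1}_\mathrm{e}$ unit ball in $C^0_\mathrm{e}$ giving a uniform modulus of continuity for the image set, a Lebesgue-constant interpolation bound, and composition with the bounded $\mathcal{L}:L^\infty_\mathrm{e}\to C^{0,1}_\mathrm{e}$. The only (harmless) differences are cosmetic: you derive the explicit bound $\|\mathcal{L}(e,0,0)\|_{0,1}\le 2\|e\|_\infty$ and use the elementary best-approximation-by-constants estimate $(1+\Lambda_m)\tilde\omega(C_\mathrm{msh}/L)$, whereas the paper cites Rivlin's sharper bound $6(1+\Lambda_m)\omega_g(C_\mathrm{msh}/(2mL))$ and the operator norm of $\mathcal{L}$.
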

\begin{proof}
  The closed set $B^{0,1}_1(0)$ of functions with Lipschitz norm less
  than or equal to unity is compact in $C^0_\mathrm{e}$. By mild
  differentiability of $g$ the map $Dg(x^*)$ is in
  $\Lin(C^0_\mathrm{e};C^0_\mathrm{e})$, so a continuous (bounded)
  linear map from $C^0_\mathrm{e}$ into itself.
  %% JS: comment here, because I keep getting confused. We really need
  %% this equicontinuity argument. We do not simply have
  %% |[Dg(x^*)y](t+h)-[Dg(x^*)y](t)|<=\|Dg(x^*)\|_0\lip y, because
  %% [Dg(x^*)y]_h\neq Dg(x^*)[y_h]. We could apply the argument
  %% |[Dg(x^*)y](t+h)-[Dg(x^*)y](t)|<=\|Dg(x^*)\|_{1<-1}\lip y, but
  %% this would require that Dg(x^*) is in Lin(C^1,C^1), which
  %% requires that g is mildly differentiable twice.
  Consequently, the set
  $\mathcal{S}:=\{Dg(x^*)y:\|y\|_{0,1}\leq 1\}$ is compact in
  $C^0_\mathrm{e}$, and, hence, uniformly equicontinuous. This means
  that there exists a uniform modulus of continuity for this set
  $\mathcal{S}$, a continuous monotone increasing function
  $\omega_g:[0,\infty)\to[0,\infty)$ with $\omega_g(0)=0$, such that
  \begin{align*}
    |[Dg(x^*)y](t+h)-[Dg(x^*)y](t)|\leq \omega_g(h)\mbox{\quad for all $t\in\R$ and $y$ with $\|y\|_{0,1}\leq1$.}
  \end{align*}
  Furthermore, the interpolation approximation $\mathcal{P}_L$ satisfies for any continuous function $z$ (see \cite{riv69})
  \begin{align}
    \|z-\mathcal{P}_Lz\|_\infty \leq 6(1+\Lambda_m)\omega_z(C_\mathrm{msh}/(2mL))\label{PL:genconv:mod}
  \end{align}
  where $\Lambda_m$ is the
  Lebesgue constant  for interpolation at the points
  $(t_{\mathrm{c,}j})_{j=1}^m$ on the interval $[0,1]$ chosen in the
  discretization \eqref{res:bvp:disc}, and $\delta\mapsto \omega_z(\delta)$ is the modulus of continuity for $z$. Thus, the equicontinuity with modulus of continuity $\omega_g$ on $\mathcal{S}$ implies that  
  \begin{align}
    \|[I-\mathcal{P}_L]Dg(x^*)y\|_\infty \leq 6(1+\Lambda_m)\omega_g(C_\mathrm{msh}/(2mL))
    \mbox{\quad if $\|y\|_{0,1}\leq 1$.}\label{PL:modcont}
  \end{align}
  Defining
  %\begin{align*}
    $\omega^*(s)=6(1+\Lambda_m)\|\mathcal{L}\|_{C^{0,1}_\mathrm{e}\leftarrow L^\infty_\mathrm{e}}\omega_g(C_\mathrm{msh}\,s/(2m))$,
  %\end{align*}
  \eqref{PL:modcont} implies
%\begin{align*}
%  \|\mathcal{L}[I-\mathcal{P}_L]Dg(x^*)\|_{C^{0,1}_\mathrm{e}\leftarrow C^{0,1}_\mathrm{e}}&\leq\omega^*(1/L),
%\end{align*}
%which is 
the claim of the lemma.
\end{proof}
\begin{remark}[Sharper estimate if $G$ is mildly differentiable twice]
  If $G$ is mildly differentiable twice, then $Dg(x^*)\in\Lin(C^1_\mathrm{e};C^1_\mathrm{e})$, such that the sharper estimate
  \begin{align}
    \label{rem:DPhi:consistency:sharp}
  \|D\Phi_L(x^*)-D\Phi(x^*)\|_{C^{0,1}_\mathrm{e}\leftarrow C^{0,1}_\mathrm{e}}\leq \frac{3(1+\Lambda_m)}{m}\frac{\|\mathcal{L}\|_{C^{0,1}_\mathrm{e}\leftarrow L^\infty_\mathrm{e}}\|Dg(x^*)\|_{C^1_\mathrm{e}\leftarrow C^1_\mathrm{e}}}{L}    
  \end{align}
  holds (replacing $\omega_g$ with $\|Dg(x^*)\|_{C^1_\mathrm{e}\leftarrow C^1_\mathrm{e}}\|$).
\end{remark}
The stability of the discretized problem is a straightforward consequence of the previous lemma.
\begin{corollary}\label{thm:stabilityPhiL}
  Under
  Assumptions~\ref{ass:existence},\,\ref{ass:mdiff},\,\ref{ass:linear:inv} from
   section~\ref{sec:assumptions} the fixed point problem
  $\Phi_L(x)=x$ is stable in $x^*$, that is, there exists a bound
  $C_{\mathrm{stab},L}$ for
  $\|[I-D\Phi_L(x^*)]^{-1}\|_{C^{0,1}_\mathrm{e}\leftarrow
    C^{0,1}_\mathrm{e}}$ for sufficiently large $L$, and
  $C_{\mathrm{stab},L}\to C_{\mathrm{stab},\infty}$ for $L\to\infty$.
\end{corollary}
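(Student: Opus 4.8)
The plan is to prove this by a routine Banach perturbation (Neumann series) argument, treating $I-D\Phi_L(x^*)$ as a small perturbation of the invertible operator $I-D\Phi(x^*)$, with both regarded as bounded operators on the Lipschitz space $C^{0,1}_\mathrm{e}$. The two ingredients I would assemble are already in hand. First, by Assumption~\ref{ass:linear:inv} together with the discussion following it, $I-D\Phi(x^*)$ is boundedly invertible on $C^{0,1}_\mathrm{e}$, and by \eqref{def:Cstab:infty} its inverse has norm $C_{\mathrm{stab},\infty}=\|[I-D\Phi(x^*)]^{-1}\|_{C^{0,1}_\mathrm{e}\leftarrow C^{0,1}_\mathrm{e}}$. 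Second, Lemma~\ref{thm:DPhi:consistency:alt} applies: Assumption~\ref{ass:existence} supplies the fixed point $x^*$ and Assumption~\ref{ass:mdiff} guarantees $G$ is mildly differentiable at least once, so there is a modulus $\omega^*$ with $\omega^*(0)=0$ and $\|D\Phi_L(x^*)-D\Phi(x^*)\|_{C^{0,1}_\mathrm{e}\leftarrow C^{0,1}_\mathrm{e}}\leq\omega^*(1/L)$, which tends to $0$ as $L\to\infty$.

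Then I would combine these by factoring. Writing $E_L:=D\Phi_L(x^*)-D\Phi(x^*)$, we have $I-D\Phi_L(x^*)=[I-D\Phi(x^*)]\bigl(I-[I-D\Phi(x^*)]^{-1}E_L\bigr)$, so invertibility of $I-D\Phi_L(x^*)$ reduces to invertibility of the second factor. Its perturbation part has operator norm at most $C_{\mathrm{stab},\infty}\,\omega^*(1/L)$, so choosing $L$ large enough that $\omega^*(1/L)<1/(2C_{\mathrm{stab},\infty})$ (possible since $\omega^*(1/L)\to0$) makes this at most $1/2<1$. The Neumann series then converges in $\Lin(C^{0,1}_\mathrm{e};C^{0,1}_\mathrm{e})$, giving invertibility together with the bound
\begin{align*}
  \|[I-D\Phi_L(x^*)]^{-1}\|_{C^{0,1}_\mathrm{e}\leftarrow C^{0,1}_\mathrm{e}}\leq\frac{C_{\mathrm{stab},\infty}}{1-C_{\mathrm{stab},\infty}\,\omega^*(1/L)}=:C_{\mathrm{stab},L}.
\end{align*}
Finally, letting $L\to\infty$ and using $\omega^*(1/L)\to0$ yields $C_{\mathrm{stab},L}\to C_{\mathrm{stab},\infty}$, which is exactly the claim.

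I do not expect a genuine obstacle: the mechanism is the standard perturbation lemma, and the convergence $C_{\mathrm{stab},L}\to C_{\mathrm{stab},\infty}$ is immediate from the explicit formula. The one point that deserves care — and the reason the preceding lemmas were set up the way they are — is that every operator norm here must be taken in the \emph{same} space $C^{0,1}_\mathrm{e}$. The consistency estimate of Lemma~\ref{thm:DPhi:consistency:alt} is deliberately phrased in the $C^{0,1}_\mathrm{e}\leftarrow C^{0,1}_\mathrm{e}$ norm, and \eqref{def:Cstab:infty} records that the infinite-dimensional inverse is bounded in that identical norm; this compatibility is what lets the Neumann series live in $\Lin(C^{0,1}_\mathrm{e};C^{0,1}_\mathrm{e})$ with no change of norm, which would otherwise be delicate given the loss of differentiability that forced the use of the Lipschitz space in the first place.
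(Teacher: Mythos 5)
Your proposal is correct and is essentially the paper's own proof: the paper likewise invokes the Banach perturbation lemma together with Lemma~\ref{thm:DPhi:consistency:alt}, chooses $L\geq L_\mathrm{diff}$ with $\omega^*(1/L_\mathrm{diff})\leq 1/(2C_{\mathrm{stab},\infty})$, and arrives at the same bound $C_{\mathrm{stab},\infty}/(1-\omega^*(1/L)C_{\mathrm{stab},\infty})$, which it merely majorizes further by $(1+2C_{\mathrm{stab},\infty}\omega^*(1/L))C_{\mathrm{stab},\infty}=:C_{\mathrm{stab},L}$. Your explicit Neumann-series factoring is just an unwinding of that cited lemma, and your closing remark about keeping all operator norms in $C^{0,1}_\mathrm{e}$ correctly identifies why the preceding lemma and \eqref{def:Cstab:infty} were phrased in that norm.
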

\begin{proof}
By Assumption \ref{ass:linear:inv}, $I-D\Phi(x^*)$ has a bounded inverse. The Banach perturbation lemma (e.g., \cite[Theorem 2.1.1]{ort90}) and Lemma~\ref{thm:DPhi:consistency:alt} then ensure that $I-D\Phi_L(x^*)=I-\mathcal{L}\mathcal{P}_LDg(x^*)$ has a bounded
inverse, too, for sufficiently large $L$. Let $L_\mathrm{diff}$ be such that
%\begin{align}
%  \label{def:Ldiff}
  $\omega^*(1/L_\mathrm{diff})\leq 1/(2C_{\mathrm{stab},\infty})$.
%\end{align}
Then, for $L\geq L_\mathrm{diff}$, the inverse has the norm
\begin{align}\nonumber
  \|[I-D\Phi_L(x^*)]^{-1}\|_{C^{0,1}_\mathrm{e}\leftarrow C^{0,1}_\mathrm{e}}&\leq
  \cfrac{C_{\mathrm{stab},\infty}}{1-\omega^*(1/L)C_{\mathrm{stab},\infty}}\\
    \label{def:CstabL}
    &\leq (1+2C_{\mathrm{stab},\infty}\omega^*(1/L))C_{\mathrm{stab},\infty}=:C_{\mathrm{stab},L}.
\end{align}
\end{proof}
\noindent
A rough upper bound for the norm of the inverse of $I-D\Phi_L(x^*)$ for $L\geq L_\mathrm{diff}$ is, thus,
\begin{align}
  \label{def:Cstab}
  C_\mathrm{stab}:=C_{\mathrm{stab},L_\mathrm{diff}}=(1+2C_{\mathrm{stab},\infty}\omega^*(1/L_\mathrm{diff}))C_{\mathrm{stab},\infty}\leq 2C_{\mathrm{stab},\infty}.
\end{align}
Using Corollary \ref{thm:stabilityPhiL}, we can isolate $\delta^x$
in the identity \eqref{dfp2:split}:
\begin{equation}\label{rhs:contraction}
\delta^x=[I-\Phi_L(x^*)]^{-1}[\epsilon_\mathrm{c}(L)+\epsilon_\mathrm{nl}(L,\delta^x)].
\end{equation}
In order to prove that the right-hand side of \eqref{rhs:contraction} is a contraction --- and, thus, defines a well-posed fixed-point problem --- we need suitable bounds for the consistency error $\epsilon_\mathrm{c}(L)$ and the nonlinear part $\epsilon_\mathrm{nl}(L,\delta^x)$. For the consistency error the smoothness of the solution $x^*$, established in Corollary~\ref{thm:regularity:fp}, enables us to apply the convergence theory for interpolation of smooth functions in the following lemma.
\begin{lemma}\label{lemma:cons}
  Let $x^*$ be a fixed point of $\Phi$ and $G$ be mildly differentiable to order $\lm\geq1$. Then, the consistency error $\epsilon_\mathrm{c}(L)=\mathcal{L}(\mathcal{P}_L-I)g(x^*)$ in \eqref{dfp2:consistency} satisfies
\begin{align*}
\|\epsilon_\mathrm{c}(L)\|_{0,1}=O(L^{-\min\{\lm,m\}}),
\end{align*}
where $m-1$ is the degree of the interpolation polynomial used in $\mathcal{P}_L$ in \eqref{def:interp:PL}.
\end{lemma}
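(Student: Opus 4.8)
The plan is to reduce the consistency error to a single scalar piecewise-polynomial interpolation error. First I would use that the projection $\mathcal{P}_L$ in \eqref{def:interp:PL} acts as the identity on the two finite-dimensional components of $g(x^*)$, so that $(\mathcal{P}_L-I)g(x^*)$ vanishes in its $\alpha$- and $\mu$-components. Writing $w^*(t):=G(v^*_t,\mu^*)$ for the remaining (first) component of $g(x^*)$, we then have $\epsilon_\mathrm{c}(L)=\mathcal{L}\,((P_L-I)w^*,\,0,\,0)$. Since $\mathcal{L}$ is a bounded linear operator from $L^\infty_\mathrm{e}$ into $C^{0,1}_\mathrm{e}$ (it is an anti-derivative with an affine correction, see \eqref{integral_ext}), it follows that $\|\epsilon_\mathrm{c}(L)\|_{0,1}\leq \|\mathcal{L}\|_{C^{0,1}_\mathrm{e}\leftarrow L^\infty_\mathrm{e}}\,\|(P_L-I)w^*\|_\infty$, so the entire estimate reduces to the scalar interpolation error $\|(P_L-I)w^*\|_\infty$.

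Second, I would pin down the regularity of $w^*$. Because $x^*$ is a fixed point, $v^*$ satisfies the differential equation \eqref{G:simple:DEv}, i.e.\ $(v^*)'=w^*$, and Corollary~\ref{thm:regularity:fp} gives $v^*\in C^{\lm+1}_\pi$. Hence $w^*=(v^*)'\in C^{\lm}_\pi$: it is a genuinely smooth, $1$-periodic function of time, exactly the object to which classical interpolation theory applies. (Alternatively this follows directly from the chain rule in Lemma~\ref{thm:diff:shift} applied to $\gts(v^*)$.)

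The remaining and main step is the interpolation estimate $\|(P_L-I)w^*\|_\infty=O(L^{-\min\{\lm,m\}})$ for $w^*\in C^{\lm}_\pi$ interpolated piecewise by degree-$(m-1)$ polynomials on a mesh obeying \eqref{res:msh:bound}. I would argue interval by interval and take the maximum over the $L$ pieces, splitting into two regimes according to whether smoothness or polynomial degree is the binding constraint. When $\lm\geq m$, we have $w^*\in C^m$ and the classical interpolation error formula $(w^*-P_Lw^*)(t)=\tfrac{1}{m!}(w^*)^{(m)}(\xi)\prod_{j=1}^m(t-t_{i,j})$ for some $\xi$ in the subinterval bounds the error on an interval of length $h_i\leq C_\mathrm{msh}/L$ by $\tfrac{1}{m!}h_i^m\,\|(w^*)^{(m)}\|_\infty=O(L^{-m})$. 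When $\lm<m$, I would instead combine the Lebesgue-constant bound $\|(I-P_L)w^*\|_{\infty,[t_{i-1},t_i]}\leq(1+\Lambda_m)\,E_{m-1}(w^*)$, where $E_{m-1}$ denotes the best uniform approximation error by polynomials of degree at most $m-1$ on the subinterval, with a Taylor estimate of that best-approximation error, $E_{m-1}(w^*)\leq \tfrac{1}{\lm!}h_i^{\lm}\,\omega((w^*)^{(\lm)},h_i)$, obtained by subtracting the degree-$\lm$ Taylor polynomial (which lies in the interpolation space since $\lm\leq m-1$). This yields $O(L^{-\lm})$, in fact $o(L^{-\lm})$ since $(w^*)^{(\lm)}$ is uniformly continuous. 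The two regimes combine to $O(L^{-\min\{\lm,m\}})$. The only genuine subtlety is the second regime $\lm<m$, where the convergence order is capped by the smoothness of $w^*$ rather than by the polynomial degree; the first regime reproduces the familiar estimate underlying the constant-delay analysis of \citep{andoSIAM2020}.
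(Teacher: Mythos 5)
Your proposal is correct and takes essentially the same route as the paper's proof: it reduces the estimate to the scalar interpolation error via the boundedness of $\mathcal{L}:L^\infty_\mathrm{e}\to C^{0,1}_\mathrm{e}$, obtains $g(x^*)\in C^{\lm}_\mathrm{e}$ from Corollary~\ref{thm:regularity:fp}, and splits into the same two regimes, using the Cauchy interpolation remainder when $\lm\geq m$. The only difference is cosmetic: for $\lm<m$ the paper simply cites standard results \citep{arn01}, whereas you derive the same bound in a self-contained way from the Lebesgue constant and a Taylor/best-approximation argument (even noting the slightly sharper $o(L^{-\lm})$), which is a harmless elaboration rather than a different approach.
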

\begin{proof}
By Corollary~\ref{thm:regularity:fp}, $x^*$ is in $C^{\lm+1}_\mathrm{e}$ and, thus, $g(x^*)$ is in  $C^\lm_\mathrm{e}$. Let $[t_i,t_{i+1}]$ be one of the subintervals in the mesh of the discretization, where $i\in\{0,\ldots,L-1\}$. If $\lm\geq m$, then
\begin{align}
\|(\mathcal{P}_L-I)g(x^*)\vert_{[t_i,t_{i+1}]}\|_{\infty}\leq \frac{\|[g(x^*)]^{(m)}\vert_{[t_i,t_{i+1}]}\|_0}{m!}[t_{i+1}-t_i]^{m}\label{disc:error:mleqlmax}  
\end{align}
follows by the Cauchy interpolation remainder theorem (see, e.g., \cite[Section 6.1, Theorem 2]{kc02}).
If $\lm\leq m$, by \cite[Theorems 1.8-1.9]{arn01},
$$
\|(\mathcal{P}_L-I)g(x^*)\vert_{[t_i,t_{i+1}]}\|_{\infty}\leq c\left[\frac{t_{i+1}-t_i}{2m}\right]^{\lm}\|g(x^*)^{(\lm)}\|_0.
$$
Inserting the upper bound $C_\mathrm{msh}/L$ for the interval lengths, the statement of the lemma follows from the boundedness of the operator $\mathcal{L}:L^{\infty}_\mathrm{e}\to C^{0,1}_\mathrm{e}$.
\end{proof}

\paragraph{Smallness of nonlinear term $\epsilon_\mathrm{nl}(L,\delta^x)$}
The nonlinear term $\epsilon_\mathrm{nl}(L,\delta^x)$ in the identity
\eqref{dfp2:split} is zero if $\delta^x$ is zero. We now want to find
a Lipschitz constant for $\epsilon_\mathrm{nl}$ with respect to
$\delta^x$ that is sufficiently small. The following lemma provides us
with an estimate.
\begin{lemma}\label{lemma:nonlin}
  Let $x^*$ be a fixed point of $\Phi$ and $G$ be mildly differentiable to order $\lm\geq1$. Then, for all $\rho_\mathrm{nl}>0$ there exists $r_\mathrm{nl}>0$ such that all $\delta^{x,1},\delta^{x,2}\in\mathcal{B}^{0,1}_{r_\mathrm{nl}}(0)$ and $L>0$ satisfy
\begin{align*}
\|\epsilon_\mathrm{nl}(L,\delta^{x,1})-\epsilon_\mathrm{nl}(L,\delta^{x,2})\|_{0,1}&\leq\rho_\mathrm{nl}\|\delta^{x,1}-\delta^{x,2}\|_{0,1}\mbox{, implying, in particular,}\\
\|\epsilon_\mathrm{nl}(L,\delta^x)\|_{0,1}&\leq\rho_\mathrm{nl}\|\delta^x\|_{0,1}\mbox{\quad for all $\delta^x\in B^{0,1}_{r_\mathrm{nl}}(0)$.}
\end{align*}
\end{lemma}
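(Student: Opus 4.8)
The plan is to reduce the claim to the finite-difference estimate of Corollary~\ref{thm:findiff}. Write $N(\delta^x):=g(x^*+\delta^x)-g(x^*)-Dg(x^*)\delta^x$ for the nonlinear remainder of $g$, so that $\epsilon_\mathrm{nl}(L,\delta^x)=\mathcal{L}\mathcal{P}_LN(\delta^x)$ and, since the linear term cancels in the difference,
\[
  \epsilon_\mathrm{nl}(L,\delta^{x,1})-\epsilon_\mathrm{nl}(L,\delta^{x,2})=\mathcal{L}\mathcal{P}_L\bigl[N(\delta^{x,1})-N(\delta^{x,2})\bigr],
\]
where $N(\delta^{x,1})-N(\delta^{x,2})=g(x^*+\delta^{x,1})-g(x^*+\delta^{x,2})-Dg(x^*)(\delta^{x,1}-\delta^{x,2})$. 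First I would observe that the last two components of $g$ in \eqref{g_ext}, namely $\alpha$ and $\mu+R_\mathrm{aff}[v,\mu]$, are affine and hence coincide with their first-order Taylor expansions, so $N$ vanishes in those components. Only the first component, the $\gts$-type map $t\mapsto G(v_t,\mu)$, contributes, and identifying it with $\gts$ evaluated at $u=(v^*,\mu^*)$ (with $\mu$ embedded as a constant function) turns $N(\delta^{x,1})-N(\delta^{x,2})$ into exactly $\gts(u+v)-\gts(u+w)-D\gts(u)[v-w]$ for $v=(\delta^{v,1},\delta^{\mu,1})$ and $w=(\delta^{v,2},\delta^{\mu,2})$.

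Next I would pass to the weaker $C^0/L^\infty$ norm in order to get a bound uniform in $L$. Since $\mathcal{L}:L^\infty_\mathrm{e}\to C^{0,1}_\mathrm{e}$ is bounded and the interpolation projection obeys $\|\mathcal{P}_Lz\|_\infty\leq\Lambda_m\|z\|_\infty$ with Lebesgue constant $\Lambda_m$ independent of $L$, we obtain
\[
  \|\epsilon_\mathrm{nl}(L,\delta^{x,1})-\epsilon_\mathrm{nl}(L,\delta^{x,2})\|_{0,1}\leq\|\mathcal{L}\|_{C^{0,1}_\mathrm{e}\leftarrow L^\infty_\mathrm{e}}\,\Lambda_m\,\bigl\|N(\delta^{x,1})-N(\delta^{x,2})\bigr\|_0.
\]
This is the step that produces the required $L$-uniformity: the interpolation is uniformly bounded only as a map $C^0\to L^\infty$, so $N$ must be measured in the weak $C^0$ norm, while the Lipschitz bound in $C^{0,1}$ is recovered through the one-degree regularization built into $\mathcal{L}$.

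Finally I would invoke Corollary~\ref{thm:findiff}. By Corollary~\ref{thm:regularity:fp} we have $(v^*,\mu^*)\in C^2_\pi\subset C^1_\pi$, so the corollary applies at $u=(v^*,\mu^*)$. Given $\rho_\mathrm{nl}>0$, set $\tilde\rho=\rho_\mathrm{nl}/(\|\mathcal{L}\|_{C^{0,1}_\mathrm{e}\leftarrow L^\infty_\mathrm{e}}\Lambda_m)$; the limit \eqref{thm:gcontdiff} supplies $r_\mathrm{nl}>0$ with $\|\gts(u+v)-\gts(u+w)-D\gts(u)[v-w]\|_0\leq\tilde\rho\|v-w\|_{0,1}$ whenever $\|v\|_{0,1},\|w\|_{0,1}\leq r_\mathrm{nl}$. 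Combining this with $\|v-w\|_{0,1}\leq\|\delta^{x,1}-\delta^{x,2}\|_{0,1}$ and the previous estimate yields the Lipschitz bound with constant $\rho_\mathrm{nl}$, uniform in $L$; the second inequality then follows by taking $\delta^{x,2}=0$, since $N(0)=0$ forces $\epsilon_\mathrm{nl}(L,0)=0$.

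The main obstacle is precisely the norm matching just highlighted: estimating $N$ in $C^{0,1}$ would be hopeless for an $L$-independent constant, because $\mathcal{P}_L$ is not uniformly bounded in the Lipschitz norm. The argument works only because mild differentiability makes $DG$ continuously extendable to $C^0$ arguments, which is what endows Corollary~\ref{thm:findiff} with superlinear smallness of the finite-difference remainder measured in $C^0$ yet controlled by the $C^{0,1}$ size of the deviations.
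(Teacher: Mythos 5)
Your proposal is correct and follows essentially the same route as the paper's proof: both reduce the claim to Corollary~\ref{thm:findiff} applied to the remainder $g(x^*+\delta^{x,1})-g(x^*+\delta^{x,2})-Dg(x^*)[\delta^{x,1}-\delta^{x,2}]$ measured in the $C^0$-norm, and recover the $\|\cdot\|_{0,1}$-bound through $\|\mathcal{L}\|_{C^{0,1}_\mathrm{e}\leftarrow L^\infty_\mathrm{e}}$ together with the $L$-uniform bound on $\|\mathcal{P}_L\|_{L^\infty_\mathrm{e}\leftarrow C^0_\mathrm{e}}$ for fixed degree $m-1$. Your explicit remarks that the affine components of $g$ contribute nothing to the remainder and that $x^*\in C^2_\mathrm{e}$ justifies applying the corollary at $u=(v^*,\mu^*)$ merely spell out steps the paper leaves implicit.
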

\begin{proof} 
  Mild differentiability of $g$ implies by
  Corollary~\ref{thm:findiff} that
  we can find for every $\rho_\mathrm{nl}>0$ a radius
  $r_\mathrm{nl}>0$ such that
   \begin{multline}
     \label{nonlin:estimate0}
     \left\|g(x^*+\delta^{x,1})-g(x^*+\delta^{x,2})-Dg(x^*)\left[\delta^{x,1}-\delta^{x,2}\right]\right\|_0\leq\\
     \frac{\rho_\mathrm{nl}}{\|\mathcal{L}\|_{C^{0,1}_\mathrm{e}\leftarrow L^\infty_\mathrm{e}}\|\mathcal{P}_L\|_{L^\infty_\mathrm{e}\leftarrow C^0_\mathrm{e}}}\left\|\delta^{x,1}-\delta^{x,2}\right\|_{0,1}
   \end{multline}
for all $\delta^{x,1},\delta^{x,2}\in B^{0,1}_{r_\mathrm{nl}}(0)$. In the denominator on the right-hand side the norm $\|\mathcal{P}_L\|_{L^\infty_\mathrm{e}\leftarrow C^0_\mathrm{e}}$ has a uniform upper bound for all $L$ as the degree $m-1$ of the interpolation polynomial in $\mathcal{P}_L$ in \eqref{def:interp:PL} is fixed. Note the stronger $\|\cdot\|_{0,1}$-norm on the right-hand side in \eqref{nonlin:estimate0} required by mild differentiability. Thus, by definition of $\epsilon_\mathrm{nl}$ in \eqref{dfp2:nonlinearity}
\begin{multline*}
\|\epsilon_\mathrm{nl}(L,\delta^{x,1})-\epsilon_\mathrm{nl}(L,\delta^{x,2})\|_{0,1}\leq\\
\|\mathcal{L}\|_{C^{0,1}_\mathrm{e}\leftarrow L^\infty_\mathrm{e}}\|\mathcal{P}_L\|_{L^\infty_\mathrm{e}\leftarrow C^0_\mathrm{e}}
\left\|g(x^*+\delta^{x,1})-g(x^*+\delta^{x,2})-Dg(x^*)\left[\delta^{x,1}-\delta^{x,2}\right]\right\|_0\\
\leq\rho_\mathrm{nl}\|\delta^{x,1}-\delta^{x,2}\|_{0,1}.
\end{multline*}
\end{proof}
\paragraph{Convergence result}
We can now combine Corollary \ref{thm:stabilityPhiL} and Lemmas~\ref{lemma:cons} and \ref{lemma:nonlin} into a convergence theorem.
\begin{theorem}[High-order convergence of collocation]\label{thm:conv}
  Let $G$ be mildly differentiable to order $\lm\geq1$, and let
  $x^*=(v^*,\alpha^*,\mu^*)$ be a fixed point of $\Phi$, as defined in
  \eqref{Phi1}, with bounded inverse of $I-D\Phi(x^*)$, as listed in
  Assumptions~~\ref{ass:existence},\,\ref{ass:mdiff},\,\ref{ass:linear:inv}
  in Section~\ref{sec:assumptions}.

  Then there exists a radius $r>0$ such that the discretized fixed
  point problem $x=\Phi_L(x)$ with $L$ polynomial pieces of degree $m$
  has a unique solution $x^L$ in the ball $B^{0,1}_r(x^*)$ for all
  sufficiently large $L$. The error $\delta^{x,L}=x^L-x^*$ satisfies
$$
\|\delta^{x,L}\|_{0,1}=O(L^{-\min\{\lm,m\}}).
$$
\end{theorem}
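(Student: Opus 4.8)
\emph{Proof proposal.} The plan is to read the discretized problem as the fixed-point equation \eqref{rhs:contraction} and to solve it with Banach's contraction principle on a small closed ball of $C^{0,1}_\mathrm{e}$. Writing
\begin{align*}
  \mathcal{T}_L(\delta^x):=[I-D\Phi_L(x^*)]^{-1}\left[\epsilon_\mathrm{c}(L)+\epsilon_\mathrm{nl}(L,\delta^x)\right],
\end{align*}
a solution $x^L=x^*+\delta^x$ of $x=\Phi_L(x)$ corresponds exactly to a fixed point $\delta^x=\mathcal{T}_L(\delta^x)$ in $C^{0,1}_\mathrm{e}$. All three analytic ingredients are already assembled: stability (Corollary~\ref{thm:stabilityPhiL}) gives the uniform bound $\|[I-D\Phi_L(x^*)]^{-1}\|_{C^{0,1}_\mathrm{e}\leftarrow C^{0,1}_\mathrm{e}}\leq C_\mathrm{stab}$ from \eqref{def:Cstab} for all $L\geq L_\mathrm{diff}$; the nonlinear smallness (Lemma~\ref{lemma:nonlin}) makes the Lipschitz constant of $\delta^x\mapsto\epsilon_\mathrm{nl}(L,\delta^x)$ as small as desired on a ball whose radius is independent of $L$; and consistency (Lemma~\ref{lemma:cons}) provides $\|\epsilon_\mathrm{c}(L)\|_{0,1}=O(L^{-\min\{\lm,m\}})$, which in particular tends to $0$.

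First I would fix the contraction factor by setting $\rho_\mathrm{nl}:=1/(2C_\mathrm{stab})$ and taking $r:=r_\mathrm{nl}$ to be the (\mbox{$L$-independent}) radius supplied by Lemma~\ref{lemma:nonlin} for this $\rho_\mathrm{nl}$. Then, for any $\delta^{x,1},\delta^{x,2}\in B^{0,1}_r(0)$, the identity $\epsilon_\mathrm{nl}(L,0)=0$ together with Lemma~\ref{lemma:nonlin} and the stability bound yields
\begin{align*}
  \|\mathcal{T}_L(\delta^{x,1})-\mathcal{T}_L(\delta^{x,2})\|_{0,1}\leq C_\mathrm{stab}\,\rho_\mathrm{nl}\,\|\delta^{x,1}-\delta^{x,2}\|_{0,1}=\tfrac12\|\delta^{x,1}-\delta^{x,2}\|_{0,1},
\end{align*}
so $\mathcal{T}_L$ is a $\tfrac12$-contraction on the ball. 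For the self-mapping property I would split $\mathcal{T}_L(\delta^x)=[\mathcal{T}_L(\delta^x)-\mathcal{T}_L(0)]+\mathcal{T}_L(0)$, bound the first bracket by $\tfrac12\|\delta^x\|_{0,1}\leq\tfrac12 r$ as above, and bound the second by $\|\mathcal{T}_L(0)\|_{0,1}\leq C_\mathrm{stab}\|\epsilon_\mathrm{c}(L)\|_{0,1}$; by Lemma~\ref{lemma:cons} the latter is $\leq\tfrac12 r$ for all sufficiently large $L$, whence $\|\mathcal{T}_L(\delta^x)\|_{0,1}\leq r$ and $\mathcal{T}_L$ maps $B^{0,1}_r(0)$ into itself.

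Since $C^{0,1}_\mathrm{e}$ is a Banach space and $B^{0,1}_r(0)$ is closed, the contraction principle then delivers a unique fixed point $\delta^{x,L}\in B^{0,1}_r(0)$, giving the locally unique discrete solution $x^L=x^*+\delta^{x,L}$ claimed in the theorem. Evaluating the fixed-point identity at $\delta^{x,L}$ and using the same splitting gives $\|\delta^{x,L}\|_{0,1}\leq\tfrac12\|\delta^{x,L}\|_{0,1}+C_\mathrm{stab}\|\epsilon_\mathrm{c}(L)\|_{0,1}$, hence $\|\delta^{x,L}\|_{0,1}\leq 2C_\mathrm{stab}\|\epsilon_\mathrm{c}(L)\|_{0,1}=O(L^{-\min\{\lm,m\}})$ by Lemma~\ref{lemma:cons}, which is the asserted rate.

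The genuine analytic difficulty of the whole argument has already been discharged in the preceding results --- the consistency of the linearization in the mixed Lipschitz norms (Lemma~\ref{thm:DPhi:consistency:alt}, feeding Corollary~\ref{thm:stabilityPhiL}) and the finite-difference extension of Corollary~\ref{thm:findiff} underlying Lemma~\ref{lemma:nonlin}. In the present assembly the one point requiring care is the interplay of two different dependences on $L$: the radius $r$ on which $\epsilon_\mathrm{nl}$ is contractive must be chosen \emph{independently} of $L$, whereas the consistency term $\epsilon_\mathrm{c}(L)$ is $L$-dependent and only \emph{eventually} small. Reconciling these so that a single fixed ball simultaneously yields both the self-mapping and the contraction property for all large $L$ is exactly where the uniformity in $L$ of both Corollary~\ref{thm:stabilityPhiL} and Lemma~\ref{lemma:nonlin} is indispensable.
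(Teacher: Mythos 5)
Your proposal is correct and follows essentially the same route as the paper's proof: a Banach contraction argument for $\delta\mapsto[I-D\Phi_L(x^*)]^{-1}[\epsilon_\mathrm{c}(L)+\epsilon_\mathrm{nl}(L,\delta)]$ on the closed ball $B^{0,1}_r(0)$ with $r=r_\mathrm{nl}$, combining the uniform stability bound $C_\mathrm{stab}$ of Corollary~\ref{thm:stabilityPhiL} with Lemmas~\ref{lemma:cons} and \ref{lemma:nonlin}, exactly as the paper does. The only (immaterial) difference is that you fix the contraction factor at $\tfrac12$ via $\rho_\mathrm{nl}=1/(2C_\mathrm{stab})$, whereas the paper works with an arbitrary $\kappa\in(0,1)$, which it later exploits to sharpen the error constant toward $C_{\mathrm{stab},\infty}$ in \eqref{disc:error:estimate}.
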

\begin{proof}
  After the Lemmas~\ref{thm:DPhi:consistency:alt}, \ref{lemma:cons}
  and \ref{lemma:nonlin} have used mild differentiablity to establish
  estimates for the ingredients of the splitting
  \eqref{dfp2:split}--\eqref{dfp2:nonlinearity}, the arguments in this
  proof are identical to those one would use for smooth right-hand
  sides, explained by \cite{mas15NM} for a general situation. We
  include them here because they show the criteria for how large to
  choose the discretization level $L$.

We first claim that the map $h:C^{0,1}_\mathrm{e}\to C^{0,1}_\mathrm{e}$ given by the right-hand side of \eqref{rhs:contraction}, i.e.,
$$
h(\delta) = [I-D\Phi_L(x^*)]^{-1}[\epsilon_\mathrm{c}(L) + \epsilon_\mathrm{nl}(L,\delta)]
$$
maps back into $B^{0,1}_r(0)$ and is a contraction for a sufficiently
small $r$ (which we need to find) and all sufficiently large $L$. By
the identity \eqref{dfp2:split} fixed points of $h$ are fixed points
of $\Phi_L$.

Let $\kappa\in(0,1)$ be arbitrary and let us recall from the proof of
stability in Lemma~\ref{thm:DPhi:consistency:alt} the upper bound
$C_{\mathrm{stab}}>0$ of
$\|[I-D\Phi_L(x^*)]^{-1}\|_{C^{0,1}_\mathrm{e}\leftarrow
  C^{0,1}_\mathrm{e}}$ given in \eqref{def:Cstab}, which holds for all
$L\geq L_\mathrm{diff}$. We choose a radius
$r=r_\mathrm{nl}$ such that the factor $\rho_\mathrm{nl}$ in
Lemma~\ref{lemma:nonlin} satisfies
%\begin{align*}
$\rho_\mathrm{nl}<\kappa/C_\mathrm{stab}$.
%\end{align*}
Then we choose the lower bound $L_{\min}$ for $L$,
\begin{align*}
  L_{\min}=&\max\{L_\mathrm{diff},L_\mathrm{c}\}\mbox{,\quad where $L_\mathrm{c}$ is s.\,t.\quad}
 \|\epsilon_\mathrm{c}(L)\|_{0,1}\leq \cfrac{1-\kappa}{C_\mathrm{stab}}\,r_\mathrm{nl}\mbox{\ for all $L\geq L_\mathrm{c}$.}
\end{align*}
For this choice of radius $r$ and $L\geq L_{\min}$ we have for all $\delta^x\in B^{0,1}_r(0)$
\begin{align}
  \nonumber
  \|h(\delta^x)\|_{0,1}&\leq \|[I-D\Phi_L(x^*)]^{-1}\|_{C^{0,1}_\mathrm{e}\leftarrow
  C^{0,1}_\mathrm{e}}\left[\|\epsilon_\mathrm{c}(L)\|_{0,1}+\rho_\mathrm{nl}\|\delta^x\|_{0,1}\right]\\
                       &\leq C_\mathrm{stab}\|\epsilon_\mathrm{c}(L)\|_{0,1}+\kappa \|\delta^x\|_{0,1}\label{convproof:h:bound}\\
  \nonumber
  &\leq (1-\kappa)r_\mathrm{nl}+\kappa \|\delta^x\|_{0,1}\leq r_\mathrm{nl}=r,
\end{align}
such that $h$ maps $B^{0,1}_r(0)$ back into itself.
For the Lipschitz constant of $h$ we have by Lemma \ref{lemma:nonlin}
\begin{align*}
\|\epsilon_\mathrm{nl}(L,\delta^{x,1})-\epsilon_\mathrm{nl}(L,\delta^{x,2})\|_{0,1}
  &\leq\rho_\mathrm{nl}\|\delta^{x,1}-\delta^{x,2}\|_{0,1}\leq\frac{\kappa}{C_\mathrm{stab}}\|\delta^{x,1}-\delta^{x,2}\|_{0,1}         
\end{align*}
for $\delta^{x,1},\delta^{x,2}\in\mathcal{B}^{0,1}_r$ such that by definition of $C_\mathrm{stab}$
and Corollary \ref{thm:stabilityPhiL} and Lemma \ref{lemma:cons} we have
\begin{align*}
\|h(\delta^{x,1})-h(\delta^{x,2})\|_{0,1}\leq&\|[I-D\Phi_L(x^*)]^{-1}\|_{C^{0,1}_\mathrm{e}\leftarrow C^{0,1}_\mathrm{e}}\frac{\kappa}{C_\mathrm{stab}}\|\delta^{x,1}-\delta^{x,2}\|_{0,1}
\\
&\leq \kappa\|\delta^{x,1}-\delta^{x,2}\|_{0,1}.
\end{align*}
 This implies that $h$ has a unique fixed point in $B^{0,1}_r(0)$. Hence, by the identity \eqref{dfp2:split}, $\Phi_L$ has a unique fixed point in $B^{0,1}_r(x^*)$. Finally, inequality \eqref{convproof:h:bound} implies for the fixed point $\delta^{x,L}$ of $h$ that
$$
\|\delta^{x,L}\|_{0,1}\leq\frac{C_\mathrm{stab}}{1-\kappa} \|\epsilon_\mathrm{c}(L)\|_{0,1}=O(L^{-\min\{\lm,m\}}).
$$
\end{proof}
\paragraph{Precise bound of discretization error}
Revisiting the definition \eqref{def:Cstab} for the bound
$C_\mathrm{stab}$ in Corollary~\ref{thm:stabilityPhiL}, we observe
that we can replace $C_\mathrm{stab}$ by the smaller
$C_{\mathrm{stab},L}$, which can be chosen as close to the
infinite-dimensional stability constant $C_{\mathrm{stab},\infty}$ as
one wishes if one increases the discretization level $L$
further. Similarly, we may choose the constant $\kappa$ as close to
$0$ as desired, where again a smaller $\kappa$ requires a larger lower
bound on $L$. Furthermore, assuming that the order of mild
differentiability $\lm$ exceeds the order $m$ of the discretization,
we may insert the concrete estimate \eqref{disc:error:mleqlmax} for
$\epsilon_\mathrm{c}(L)$. We also observe that the $m$th time
derivatives $[g(x^*)]^{(m)}$ equal the derivatives of order $m+1$ of
$v^*$, when we denote the first component of the solution $x^*$ as
$v^*$ ($x^*=(v^*,\alpha^*,\mu^*)$). With these concrete estimates we
obtain that for every $\epsilon>0$ there exists a lower bound
$L_\mathrm{low}(\epsilon)$ such that the error of the discretized
fixed point problem $\delta^{x,L}$ satisfies
\begin{align}\label{disc:error:estimate}
  \|\delta^{x,L}\|_{0,1}\leq(1+\epsilon) C_{\mathrm{stab},\infty} \frac{\|\mathcal{L}\|_{C^{0,1}_\mathrm{e}\leftarrow
      L^\infty_\mathrm{e}}\|(v^*)^{(m+1)}\|_0}{m!}  \left[\frac{C_\mathrm{msh}}{L}\right]^m
\end{align}
for all $L\geq L_\mathrm{low}(\epsilon)$. This estimate is identical to the
result one would obtain for discretizations of smooth nonlinear
infinite-dimensional differential equations.

\section{Implementation and numerical test}
\label{sec:tests}
The numerical tool \textsc{DDE-Biftool} permits implementation of systems of FDEs with finitely many discrete delays, which have the form
\begin{align}
  \label{biftool:fde}
  M\dot y(t)&=f(y(t-\tau_0),\ldots,y(t-\tau_{n_\mathrm{d}}),p)\mbox{,}&&\mbox{where $\tau_0=0$, and }\\
  \tau_j&=\tau_{\mathrm{fun},j}(y(t-\tau_0),\ldots,y(t-\tau_{j-1}),p)&&\mbox{for $j=1,\ldots,n_\mathrm{d}$,}\nonumber
\end{align}
and $M\in \R^{n_y\times n_y}$, $y(t)\in\R^{n_y}$, and
$f:\R^{n_y\times (n_\mathrm{d}+1)}\times \R^{n_p}\to\R^{n_y}$,
$\tau_{\mathrm{fun},j}:\R^{n_y\times j}\times \R^{n_p}\to\R$ are
smooth functions of their arguments. The matrix $M$ may be singular to
permit the implicit definition of delays, or the formulation of
neutral FDEs (not analyzed in this paper).
For this class of FDEs the abstract right-hand side $G_\mathrm{FDE}$ in \eqref{intro:fde} has the form
\begin{align}
  \label{eq:biftool:gfde}
  G(y,p)&=f(y(-\tau_0),\ldots,y(-\tau_{n_\mathrm{d}}),p)\mbox{,}&&\mbox{where $\tau_0=0$, and }\\
  \tau_j&=\tau_{\mathrm{fun},j}(y(-\tau_0),\ldots,y(-\tau_{j-1}),p)&&\mbox{for $j=1,\ldots,n_\mathrm{d}$.}\nonumber
\end{align}
The functional $G_\mathrm{FDE}$ is mildly differentiable $\lm$ times
if the coefficient functions $f$ and $\tau_{\mathrm{fun},j}$ are $\lm$
times continuously differentiable with respect to their arguments. 
%The functional also satisfies the extended local Lipschitz condition~\ref{def:extlip} in its domain of definition.

%\subsection{Numerical tests}
%\label{sec:numerics}
We perform our tests on the BVP \eqref{dde:ex1:rescaled}-\eqref{Raff:ex1} for $y_0=0.75$. As a starting guess for the Newton iterations we choose the solution computed with \textsc{DDE-Biftool}, unadapted mesh with $L=200$ and $m=7$. We recompute the solution using different values of $L$ and $m$ and approximate $\|y'/T+y(\cdot-p/T-y/T)\|_0$ by considering the maximum of the residuals on a uniform grid of $10001$ points, as shown in the left plot of Figure \ref{fig:sdproto_conv}. The picture on the right shows the (rescaled) solution obtained using $L=10$ and $m=5$, having actual period $T\approx 7.00$.

\begin{figure}
%\sidecaption[t]
\centering
\begin{subfigure}[t]{0.52\textwidth}
    \includegraphics[scale=0.8]{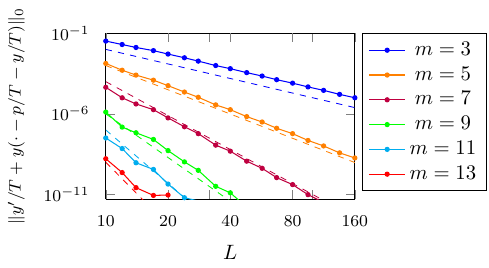}
\end{subfigure}%
~\begin{subfigure}[t]{0.45\textwidth}
\includegraphics[scale=.8]{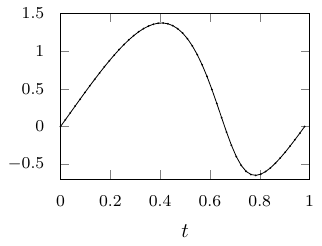}%    
\end{subfigure}
\caption{Left: maximum among the errors computed at $10001$ equidistant points in $[0,T]$ for different values of $m$ and $L$, compared to straight dashed lines having slope $m$. Right: periodic solution computed with $L=10$ and $m=5$, rescaled so as to have period 1.}
\label{fig:sdproto_conv}
\end{figure}
%\begin{figure}
%\sidecaption[t]
%\centering
%\includegraphics[scale=.4]{solution10_5.png}%
%\caption{Periodic solution computed with $L=10$ and $m=5$, rescaled so as to have period 1.}
%\label{fig:sdproto_profile}
%\end{figure}
\section{Conclusions}
In numerical bifurcation analysis of FDEs there is often a strong interest in analyzing the long-term dynamics. Such analysis includes the detection and computation of equilibria and periodic orbits regardless of their dynamical stability in a continuation framework with respect to model parameters.

The present paper provided a complete and rigorous error analysis of the piecewise orthogonal collocation for computing periodic solutions of FDEs which may include state-dependent delays. Indeed, although the method has widely been used for two decades \citep{ELR02,S06c} and incorporated into software such as \textsc{DDE-Biftool} and \textsc{knut}, the convergence of the corresponding finite-element method
had only been supported by (many) numerical experiments but never proved theoretically for a general FDE.

A convergence analysis was recently performed for FDEs with constant delays  \citep{andoSIAM2020}, based on the general approach for BVPs established in \citep{mas15NM}. The latter assumes regularity properties on the part of the right-hand sides that cannot be satisfied when state-dependent delays are present. However, using the concepts of weaker forms of differentiability developed for state-dependent delays \citep{cassidy2019,HKWW06}, allowed us to arrive at convergence estimates as sharp as those previously obtained for constant delays by only assuming to satisfy the regularity properties in the milder form. 
%In particular, our main result is the error estimate $O(L^{-\min\{\ell_{\max},m\}})$ for right-hand sides which are mildly differentiable to order $\ell_{\max}$, where $m$ is the fixed degree of the approximating piecewise polynomials and $L$ is the increasing size of the mesh. 

In numerical bifurcation analysis the next step after finding a periodic orbit numerically, is consideration of its linear stability, which ensures local asymptotic stability with respect to compatible perturbations in the initial value \citep{MP11}. For numerical computation of linear stability one needs to approximate the monodromy operator and the Floquet multipliers. Although we are not aware of any theoretical study on the convergence of such approximation with respect to the FEM strategy, preliminary experimental evidence can be found in \citep{blv22}.

The analysis carried out does not immediately extend to the spectral approach, characterized by bounded $L$ and increasing $m$, since it does assume in various points that the relevant interpolation operator is bounded. However, several of the apparent impediments to such an extension observed in \citep{andoSIAM2020} no longer hold once the BVP is formulated in periodic spaces of functions, as we have done in this paper. Therefore, the authors plan to reinvestigate the potential for spectral convergence  for periodic BVPs of FDEs. Moreover, there is potential for extending the analysis to more general classes of delay equations, such as neutral FDEs, whose further restrictions on the regularity on the right-hand sides represent a substantial obstacle.

Another issue was left open in \citep{andoSIAM2020}, even for the case of constant delays. Since the $y$-component of a numerical approximation is not differentiable for typical polynomial collocation schemes, it is unclear how the derivative of the right-hand side can be evaluated during Newton iterations, as it requires evaluation of $y'(t)$ at difficult-to-control times $t$. This discontinuous dependence on the solution implies that the standard
convergence argument for Newton iterations needs to be revisited for
FDEs.

\section*{Acknowledgments}
A.\,A.\ is a member of INdAM Research group GNCS, as well as of UMI Research group “Modellistica socio-epidemiologica”. This work was supported by the Italian Ministry of University and Research (MUR) through the PRIN 2020 project (No. 2020JLWP23) “Integrated Mathematical Approaches to Socio–Epidemiological Dynamics”, Unit of Udine (CUP G25F22000430006). The research collaboration was supported by the Lorentz Center Leiden (The Netherlands) workshop ``\emph{Towards rigorous results in state-dependent delay equations}'',
4--8 March 2024.
%-----------------------------------------------------------------------------
%-----------------------------------------------------------------------------
\bibliographystyle{abbrvnat}
\bibliography{delay}
%-----------------------------------------------------------------------------
%-----------------------------------------------------------------------------
%-----------------------------------------------------------------------------
\appendix
\section{Proof of Lemma~\ref{thm:diff:shift}}
\label{sec:proof:diff:shift}
The extendability condition \ref{def:mild:ext} in
Definition~\ref{def:mild} is only formulated for the application of
$\ell$ times the same deviation $\delta^u$. The polarization identity
(Proposition~\ref{thm:pol} below) ensures that condition \ref{def:mild:ext}
also applies to $\ell$ different deviations. 
\begin{lemma}[Multidirectional extendability]\label{thm:mild:ext0}
  If $G$ is $\lm$ times mildly differentiable according to
  Definition~\ref{def:mild}, then the map
  \begin{align*} C^\ell\times
    (C^\ell)^\ell\ni (x,y^1,\ldots,y^\ell)\mapsto D^\ell
    G(x)y^1\ldots y^\ell\in \R^{n_G}
  \end{align*}
  can be extended to a continuous map in
  $C^\ell\times (C^{\ell-1})^\ell$ for all $\ell\in\{1,\ldots,\lm\}$. 
\end{lemma}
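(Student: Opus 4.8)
The plan is to reduce the multidirectional statement to the diagonal extendability hypothesis (point~\ref{def:mild:ext} of Definition~\ref{def:mild}) by means of the polarization identity. By restricted differentiability (point~\ref{def:mild:rest}), the restriction $G\vert_{C^\ell}$ is $\ell$ times Fr\'echet differentiable, so for each fixed $x\in C^\ell$ the derivative $D^\ell G(x)$ is a bounded \emph{symmetric} $\ell$-linear form on $C^\ell$ (symmetry being the standard Schwarz property of higher Fr\'echet derivatives). Writing $Q_x(z):=D^\ell G(x)z^\ell$ for its diagonal, Proposition~\ref{thm:pol} expresses the form through finitely many diagonal evaluations,
\begin{align*}
  D^\ell G(x)y^1\cdots y^\ell=\frac{1}{\ell!}\sum_{\emptyset\neq S\subseteq\{1,\ldots,\ell\}}(-1)^{\ell-|S|}Q_x\Big(\sum_{i\in S}y^i\Big),
\end{align*}
valid for all $y^1,\ldots,y^\ell\in C^\ell$.

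First I would introduce the continuous extension $\widetilde{Q}:C^\ell\times C^{\ell-1}\to\R^{n_G}$ of the map $(u,\delta^u)\mapsto D^\ell G(u)(\delta^u)^\ell$ guaranteed by point~\ref{def:mild:ext}, and \emph{define} the candidate extension of the multilinear map by substituting $\widetilde{Q}(x,\cdot)$ for $Q_x(\cdot)$ in the polarization formula,
\begin{align*}
  E(x,y^1,\ldots,y^\ell):=\frac{1}{\ell!}\sum_{\emptyset\neq S\subseteq\{1,\ldots,\ell\}}(-1)^{\ell-|S|}\widetilde{Q}\Big(x,\sum_{i\in S}y^i\Big).
\end{align*}
For $(x,y^1,\ldots,y^\ell)\in C^\ell\times(C^{\ell-1})^\ell$ each argument $\sum_{i\in S}y^i$ lies in $C^{\ell-1}$, so $E$ is well defined on the larger space, and this is exactly the step where polarization pays off: it only ever feeds \emph{diagonal} arguments into $\widetilde{Q}$, which is all that point~\ref{def:mild:ext} supplies.

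Then I would verify the two defining properties. For agreement on the smaller space, the polarization identity above shows $E(x,y^1,\ldots,y^\ell)=D^\ell G(x)y^1\cdots y^\ell$ whenever all $y^i\in C^\ell$, so $E$ restricts correctly. For continuity, each summand factors as $\widetilde{Q}$ composed with the jointly continuous linear map $(x,y^1,\ldots,y^\ell)\mapsto(x,\sum_{i\in S}y^i)$ from $C^\ell\times(C^{\ell-1})^\ell$ into $C^\ell\times C^{\ell-1}$; since $\widetilde{Q}$ is jointly continuous by hypothesis, each summand, and hence their finite linear combination $E$, is continuous on $C^\ell\times(C^{\ell-1})^\ell$. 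Uniqueness of the extension then follows from density of $(C^\ell)^\ell$ in $(C^{\ell-1})^\ell$ (as $C^\ell$ is dense in $C^{\ell-1}$ in the $\|\cdot\|_{\ell-1}$ norm, e.g.\ by mollification) together with continuity.

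I do not expect a serious obstacle: the content is carried entirely by the polarization identity, which converts single-direction extendability into the multidirectional statement. The only points requiring care are bookkeeping ones, namely confirming that $D^\ell G(x)$ is genuinely symmetric so that Proposition~\ref{thm:pol} applies, and checking that substituting $\widetilde{Q}$ into the formula both reproduces the original form on $(C^\ell)^\ell$ and depends continuously on its arguments in the weaker $C^{\ell-1}$ topology. Everything else is the routine density-and-continuity argument.
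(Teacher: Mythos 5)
Your proposal is correct and follows essentially the same route as the paper, which likewise reduces multidirectional extendability to the diagonal hypothesis via the polarization identity (Proposition~\ref{thm:pol}) and concludes by continuity. You merely use the subset inclusion--exclusion form of polarization instead of the paper's $\pm1$-coefficient form, and you make explicit the details the paper leaves implicit, namely the symmetry of $D^\ell G(x)$ needed for polarization and the density of $C^\ell$ in $C^{\ell-1}$ giving uniqueness of the extension.
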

\begin{proposition}[Polarization identity]\label{thm:pol}
  Let $\ell\geq1$ be arbitrary.  There exist $2^\ell$
  coefficients $a_i\in\R$ and $\ell\, 2^{\ell-1}$ coefficients
  $b_{i,j}\in\{-1,1\}$ \textup{(}$i\in\{1,\ldots,2^{\ell-1}\}$,
  $j\in\{1,\ldots,\ell\}$\textup{)} such that for all $c_1,\ldots,c_\ell\in\R$
  the following identity holds:
\begin{align*}
  \prod_{j=1}^\ell c_i=\sum_{i=1}^{2^{\ell-1}}a_i\left[\sum_{j=1}^\ell b_{i,j}c_j\right]^\ell.
\end{align*}
\end{proposition}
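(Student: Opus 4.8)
The plan is to establish the classical polarization formula over all $2^\ell$ sign vectors and then collapse it to $2^{\ell-1}$ terms using a parity symmetry. First I would write down the candidate identity
\begin{align*}
  \prod_{j=1}^\ell c_j=\frac{1}{2^\ell\,\ell!}\sum_{\epsilon\in\{-1,1\}^\ell}\left(\prod_{k=1}^\ell\epsilon_k\right)\left(\sum_{j=1}^\ell\epsilon_j c_j\right)^\ell
\end{align*}
and verify it by direct expansion. Expanding the inner $\ell$-th power with the multinomial theorem yields monomials $\prod_j c_j^{\alpha_j}$ indexed by multi-indices $\alpha$ with $|\alpha|=\ell$, each carrying a coefficient $\binom{\ell}{\alpha}$ and a sign factor $\prod_j\epsilon_j^{\alpha_j}$.

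The key step, which I expect to be the crux of the argument, is the sign-sum computation. After multiplying by $\prod_k\epsilon_k$, the sign weight of the monomial $\alpha$ becomes $\prod_j\epsilon_j^{\alpha_j+1}$, and summing over $\epsilon\in\{-1,1\}^\ell$ factorizes as $\prod_j\bigl(\sum_{\epsilon_j=\pm1}\epsilon_j^{\alpha_j+1}\bigr)$. Each factor equals $2$ when $\alpha_j$ is odd and $0$ when $\alpha_j$ is even. Hence the whole sum vanishes unless every $\alpha_j$ is odd; since each such $\alpha_j\geq1$ and $\sum_j\alpha_j=\ell$, this forces $\alpha=(1,\ldots,1)$. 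The single surviving term has multinomial coefficient $\ell!$ and contributes a factor $2^\ell$ from the sign sum, so after the prefactor $1/(2^\ell\,\ell!)$ we are left with exactly $\prod_{j=1}^\ell c_j$, proving the formula.

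Next I would reduce the number of summands from $2^\ell$ to $2^{\ell-1}$ to match the stated counts. Pairing $\epsilon$ with $-\epsilon$ and using $\prod_k(-\epsilon_k)=(-1)^\ell\prod_k\epsilon_k$ together with $\bigl(\sum_j(-\epsilon_j)c_j\bigr)^\ell=(-1)^\ell\bigl(\sum_j\epsilon_j c_j\bigr)^\ell$ shows that the two summands coincide. Restricting to the $2^{\ell-1}$ representatives with $\epsilon_1=1$ and doubling their weights therefore gives the claimed form, with $b_{i,j}=\epsilon_j^{(i)}\in\{-1,1\}$ (so $\ell\,2^{\ell-1}$ sign coefficients in total) and $a_i=\dfrac{1}{2^{\ell-1}\ell!}\prod_{k=1}^\ell\epsilon_k^{(i)}$.

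Beyond this, the argument is entirely elementary bookkeeping: there is no analytic obstacle, and the only point requiring care is the parity-plus-degree counting that isolates the multi-index $\alpha=(1,\ldots,1)$ as the unique nonvanishing contribution. The small cases $\ell=1$ (trivially $c_1=c_1$) and $\ell=2$ (recovering $c_1c_2=\tfrac14[(c_1+c_2)^2-(c_1-c_2)^2]$) can serve as a sanity check that the coefficients $a_i$ and the $2^{\ell-1}$-term count are correct.
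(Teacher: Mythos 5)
Your proof is correct, and in fact it supplies an argument that the paper itself omits: the paper states Proposition~\ref{thm:pol} without proof, treating it as the classical polarization identity and offering only the $\ell=2$ instance $Myz=\frac14 M[y+z]^2-\frac14 M[y-z]^2$ as illustration. Your route is the standard one: the full signed sum $\frac{1}{2^\ell\,\ell!}\sum_{\epsilon\in\{-1,1\}^\ell}\bigl(\prod_k\epsilon_k\bigr)\bigl(\sum_j\epsilon_j c_j\bigr)^\ell$, the multinomial expansion with the factorized parity sum $\prod_j\bigl(\sum_{\epsilon_j=\pm1}\epsilon_j^{\alpha_j+1}\bigr)$ isolating $\alpha=(1,\dots,1)$, and the $\epsilon\mapsto-\epsilon$ symmetry (the two sign flips contribute $(-1)^{2\ell}=1$) halving the sum to the $2^{\ell-1}$ representatives with $\epsilon_1=1$. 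Your resulting coefficients $a_i=\frac{1}{2^{\ell-1}\ell!}\prod_k\epsilon_k^{(i)}$, $b_{i,j}=\epsilon_j^{(i)}$ reproduce the paper's $\ell=2$ example exactly ($a_1=1/4$, $a_2=-1/4$). One bookkeeping remark: the proposition's preamble says ``$2^\ell$ coefficients $a_i$'' while the sum and the index set run over $i\in\{1,\dots,2^{\ell-1}\}$; your construction yields $2^{\ell-1}$ coefficients $a_i$ and $\ell\,2^{\ell-1}$ signs $b_{i,j}$, which matches the displayed identity and the $b$-count, so the ``$2^\ell$'' in the statement is evidently a typo that your proof implicitly corrects rather than a gap in your argument.
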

Consequently, for any bounded $\ell$-linear map $M$ from
$X\times\ldots\times X$ to $Y$ and arbitrary linear spaces $X$ and $Y$
%\begin{align*}
  $Mx_1\ldots x_\ell=\sum_{i=1}^{2^{\ell-1}}a_iM\left[\sum_{j=1}^\ell b_{i,j}x_j\right]^\ell$ for all $x_1,\ldots,x_\ell\in X$.
%\end{align*}
In short, arbitrary combinations of arguments of the multilinear map
can be expressed as linear combination of the single-argument map
$y\mapsto M y^\ell$. For example, for $\ell=2$, the coefficients are
$a_1=1/4$, $a_2=-1/4$, $b_{1,1}=b_{2,1}=b_{1,2}=1$, $b_{2,2}=-1$:
$Myz=\frac{1}{4}M[y+z]^2-\frac{1}{4}M[y-z]^2$. Lemma~\ref{thm:mild:ext0} follows by continuity.

\paragraph{Proof of Lemma~\ref{thm:diff:shift}} We show  inductively over $k$ that
\begin{align*}
C^{k+\ell}_\pi\times (C^{k+\ell-1}_\pi)^\ell\ni(x,y^1\!\!,\ldots,y^\ell)\mapsto D^\ell\gts(x)y^1\!\!\ldots y^\ell=\left[t\mapsto  D^\ell G(x_t)y^1_t\ldots y^\ell_t\right]\in C^k_\pi
\end{align*}
is continuous, if $G$ is $k+\ell$ times mildly differentiable. The
statement of Lemma~\ref{thm:diff:shift} then follows from setting $k=\lm-\ell$.

For $k=0$ the fact that $D^\ell\gts\vert_{C^\ell_\pi}$ is well defined
at each time $t$ follows from the assumption that $G\vert_{C^\ell_\pi}$ is $\ell$ times
continuously differentiable by part~\ref{def:mild:rest} of Definition~\ref{def:mild}. The continuity with respect to
the $C^0_\pi$-norm in $t$ follows from the fact that the time shift 
%\begin{align*}
  $\R\times C^j_\pi\ni(t,u)\mapsto u_t\in C^j_\pi$
%\end{align*}
is continuous for arbitrary $j\geq0$.

For the inductive step, we first assume that for all integers $\ell\geq0$ the function
$t\mapsto D^\ell G(x_t)y^1_t\ldots y^\ell_t$ is in $C^k_\pi$ for all
$x\in C^{k+\ell}_\pi$ and all $y^1,\ldots,y^\ell\in C^{k+\ell-1}_\pi$ if $G$ is $k+\ell$ times mildly differentiable. We have to show that
$D^\ell G(x_t)y^1_t\ldots y^\ell_t$ is in $C^{k+1}_\pi$ for
$x\in C^{k+\ell+1}_\pi$, $y^1,\ldots,y^\ell\in C^{k+\ell}_\pi$ if $G$ is $k+\ell+1$ times mildly differentiable. The time
difference quotient for $D^\ell G(x_t)y^1_t\ldots y^\ell_t$ at time $t$ equals
\begin{align}
  %\frac{1}{h}\left[D^\ell G(x_{t+h})\prod_{i=1}^\ell y^i_{t+h}-D^\ell G(x_t)\prod_{i=1}^\ell y^i_t\right]=&  
  \frac{1}{h}\left[D^\ell G(x_{t+h})\prod_{i=1}^\ell y^i_t-D^\ell G(x_t)\prod_{i=1}^\ell y^i_t\right]+%\label{proof:gdiff:gth}\\ &
\sum_{j=1}^\ell D^\ell G(x_t)\left[\prod_{i\neq j}^\ell y^i_t\right]\frac{y^j_{t+h}-y^j_t}{h}
\label{proof:gdiff:gyth}
\end{align}
The function $u\mapsto D^\ell G(u)\prod_{i=1}^\ell y^i_t$ is
continuously differentiable with respect to $u$ in
$u=sx_{t+h}+(1-s)x_t$ for $s\in[0,1]$ since $x\in C^{k+\ell+1}_\pi$,
$y^1,\ldots,y^k\in C^{k+\ell}_\pi$ and $k\geq1$ by the assumption
that $G$ is $k+\ell+1$ times mildly differentiable and
$u\in C^{k+\ell+1}_\pi$ and $y^1,\ldots,y^\ell\in C^{k+\ell}_\pi$ with
$k+\ell\geq\ell+1$. Thus, we may apply the mean-value theorem for the first term
in \eqref{proof:gdiff:gyth}, resulting in the expression
\begin{align*}
  %\frac{1}{h}\left[D^\ell G(x_{t+h})\prod_{i=1}^\ell y^i_t-D^\ell G(x_t)\prod_{i=1}^\ell y^i_t\right]=\\
\int_0^1D^{\ell+1}G(sx_{t+h}+(1-s)x_t)\frac{x_{t+h}-x_t}{h}\prod_{i=1}^\ell y^i_t\dd s
\end{align*}
By Lemma~\ref{thm:mild:ext0} the derivative
$D^{\ell+1}G(u)v_{0}\ldots v_j$ is continuous for arguments
$u\in C^{\ell+1}_\pi$, $v_0,\ldots,v_\ell\in C^\ell_\pi$. The argument
$u=sx_{t+h}+(1-s)x_t$ is in $C^{k+\ell+1}_\pi$, $v_0=[x_{t+h}-x_t]/h$ is in $C^{k+\ell+1}_\pi$ and $ y^i_t$ and the
limit $x_t'$ of $[x_{t+h}-x_t]/h$ are in $C^{k+\ell}_\pi$ such that we can
take the limit for $h\to 0$ to obtain
\begin{align*}
  \lim_{h\to0}\frac{1}{h}\left[D^\ell G(x_{t+h})\prod_{i=1}^\ell y^i_t-D^\ell G(x_t)\prod_{i=1}^\ell y^i_t\right]=D^{\ell+1}G(x_t)x_t'\prod_{i=1}^\ell y^i_t.
\end{align*}
The right-hand side is a $C^k_\pi$ function by the assumption
of the inductive step. 

In the second sum in \eqref{proof:gdiff:gyth} the difference quotient
$[y^j_{t+h}-y^j_t]/h$ has the limit $(y^j_t)'$ in $C^{k+\ell-1}_\pi$
and the derivative $D^\ell G(x_t)$ can be extended continuously to
multilinear arguments in $C^{k+\ell-1}_\pi$, since $G$ is
$k+\ell\geq \ell$ times mildly differentiable. Thus, the limit for
$h\to0$ also exists for the second sum in \eqref{proof:gdiff:gyth}:
\begin{align}\label{proof:gdiff:ythlim}
  \lim_{h\to0}D^\ell G(x_t)\prod_{i\neq j}^\ell y^i_t\frac{y^j_{t+h}-y^j_t}{h}=D^\ell G(x_t)\prod_{i\neq j} y^i_t(y^j_t)'.
\end{align}
The argument $x$ in this limit is in $C^{k+\ell+1}_\pi$, the arguments
$y^i_t$ are in $C^{k+\ell}_\pi$ and $(y^j)'$ is in
$C^{k+\ell-1}_\pi$. Thus, by the assumption in the inductive step the
limits in \eqref{proof:gdiff:ythlim} are also in $C^k_\pi$ for
$j=1\ldots,\ell$. This implies that the limits of the both terms in \eqref{proof:gdiff:gyth}, and, thus, the limit of the time difference
quotient \eqref{proof:gdiff:gyth} are in $C^k_\pi$. Hence, $t\mapsto D^\ell G(x_t)y^1_t\ldots y^\ell_t$ is in $C^{k+1}_\pi$.
Finally, we check the continuity of the map
%\begin{align*}
  $C^{k+\ell+1}_\pi\times (C^{k+\ell}_\pi)^\ell\ni (x,y^1,\ldots,y^\ell)\mapsto D^\ell\gts(x)y^1\ldots y^\ell\in C^{k+1}_\pi$.
%\end{align*}
By inductive assumption the map is continuous as a map into
$C^k_\pi$. Inspecting \eqref{proof:gdiff:gyth}
and \eqref{proof:gdiff:ythlim}, its derivative  in $t$ is
\begin{align*}
  D^{\ell+1}G(x_t)x_t'\prod_{i=1}^\ell y^i_t+\sum_{j=1}^\ell D^\ell G(x_t)\prod_{i\neq j}^\ell y^i_t(y^j_t)'.
\end{align*}
All terms in this sum are continuously mapping $C^{k+\ell+1}_\pi\times (C^{k+\ell}_\pi)^\ell\ni (x,y^1,\ldots,y^\ell)$ into $C^k_\pi$ by inductive
assumption, such that $(x,y^1,\ldots,y^\ell)\mapsto D^\ell\gts(x)y^1\ldots y^\ell$ is continuous into $C^{k+1}_\pi$.
\hfill\mbox{(end of proof for Lemma~\ref{thm:diff:shift})}
\end{document}

\begin{equation}\label{}
\left\{
\begin{aligned}
A &= (-1)^k \frac{4}{k\pi} \sin \left(\frac{k \pi}{2}\right) \frac{\gamma}{2} (1-2\theta) A, \\
\theta &= \gamma \theta (1-\theta) -\frac{\gamma}{2}A^{2}.
\end{aligned}
\right.
\end{equation}

\begin{align}
\mathcal{K}(\alpha_{1},\alpha_{2})&\coloneqq e^{\int_{\alpha_{2}}^{\alpha_{1}}g_{1}(\theta)d\theta}g_{2}(\alpha_{2}),\label{K}\\
\mathcal{Klambda}(\alpha_{1},\alpha_{2})&\coloneqq -\mathcal{F}(\alpha_{1},\bar{TB})\left(\int_{\alpha_{2}}^{\alpha_{1}}\mu_{1}(\theta)\mathcal{K}(\theta,\alpha_{2})d\theta+\mu_{2}(\alpha_{2})\right).\label{Klambda}
\end{align}

\begin{equation*}
\begin{split}
z(t+4)={}& \int_{0}^4 A(\tau) h(z(t+4-\tau)) {\rm d} \tau \\
={}& \int_{0}^4 A(4-\sigma)h(z(t+\nu)){\rm d} \sigma \\
={}& \int_{0}^4 A(\tau) h(z(t+\tau)) {\rm d} \tau \\
={}& \int_{0}^4 A(\tau) h(z(-t-\tau)) {\rm d} \tau \\
={}& z(-t),
\end{split}
\end{equation*}

\begin{equation}\label{equation1}
\begin{split}
x'&=x(\alpha-\beta y),\\
y'&=-y(\gamma-\delta x).
\end{split}
\end{equation}

\begin{equation}\label{equation2}
\begin{split}
\dot{y}(t) ={}& a y(t)+b f(y(t-\gamma_{0}\tau)))\\
>{}& a y(t)-b f(y(t-\gamma_{0}\tau))\\
& +b f(y(t-\gamma_{0}\tau))\\
={}& a y(t).
\end{split}
\end{equation}

\begin{align}
\dot{y}(t) &= a y(t)+b f(y(t)),\label{equation3a}\\
\ddot{y}(t) &= a y(t) - b f(y(t)),\label{equation3b}\\
y(0) &= y_{0}.\label{equation3c}
\end{align}

\begin{equation}\label{equation4}
|x|=\begin{cases}
x,\quad &x \geq 0,\\
-x, \quad &x<0.
\end{cases}
\end{equation}

\begin{equation*}
\begin{split}
\alpha x(t)+\beta y(t)= {} &\int_{t_{0}}^{t}f(x(s),y(s-\tau))\,ds-\int_{t_{0}}^{t}g(x(s-\tau),y(s))\,ds\\
&+\int_{t_{0}}^{t}h(x(s-\tau),y(s-\tau))\,ds.
\end{split}
\end{equation*}

\begin{figure}[!h]
\centering
\includegraphics[width=\textwidth]{}
\caption{caption. See text for more details.}\label{f_E}
\end{figure}

%-----------------------------------------------------------------------------
%-----------------------------------------------------------------------------
\section{}
\label{s_}
%-----------------------------------------------------------------------------
\subsection{}
\label{s_}
%-----------------------------------------------------------------------------
%-----------------------------------------------------------------------------

%-----------------------------------------------------------------------------
{\color{red} ()}

%%% Local Variables:
%%% mode: LaTeX
%%% TeX-master: t
%%% End: